\newtheorem{theorem}{Theorem}[section]
\newtheorem{proposition}[theorem]{Proposition}
\newtheorem{corollary}[theorem]{Corollary}
\theoremstyle{remark}
\newtheorem{remark}[theorem]{Remark}
\newtheorem{example}[theorem]{\bf Example}
\renewenvironment{proof}{{\noindent\bf Proof.}}{\hfill $\Box$\par\vskip3mm}
\newcommand{\Ker}{{\rm Ker}\,}
\newcommand{\Hom}{{\rm Hom}}
\newcommand{\End}{{\rm End}}
\newcommand{\Aa}{\mathcal{A}}
\newcommand{\Bb}{\mathcal{B}}
\newcommand{\Cc}{\mathcal{C}}
\newcommand{\Ff}{\mathcal{F}}
\newcommand{\Ii}{\mathcal{I}}
\newcommand{\Ll}{\mathcal{L}}
\newcommand{\Mm}{\mathcal{M}}
\newcommand{\Tt}{\mathcal{T}}
\newcommand{\Ss}{\mathcal{S}}
\def\NN{{\mathbb N}}
\def\AA{{\mathbb A}}
\def\KK{{\mathbb K}}
\begin{document}
\title[Triangular Matrix Coalgebras and Applications]{Triangular Matrix Coalgebras and Applications}

\begin{abstract}
We formally introduce and study generalized comatrix coalgebras and upper triangular comatrix coalgebras, which are not only a dualization but also an extension of classical generalized matrix algebras. We use these to answer several open questions on Noetherian and Artinian type notions in the theory of coalgebras, and to give complete connections between these. We also solve completely the so called finite splitting problem for coalgebras: we show that $C$ is a coalgebra such that the rational part of every left finitely generated $C^*$-module splits off if and only if $C=\left(\begin{array}{cc} D & M \\ 0 & E \end{array}\right)$ is an upper triangular matrix coalgebra, for a serial coalgebra $D$ whose Ext-quiver is a finite union of cycles, a finite dimensional coalgebra $E$ and a finite dimensional bicomodule $M$.
\end{abstract}

\author{Miodrag Cristian Iovanov\\}
\thanks{2010 \textit{Mathematics Subject Classification}. 15A30, 16T15, 18E40, 16T05}
\date{}
\keywords{comatrix coalgebra, triangular, Noetherian, torsion theory, splitting, rational module}
\maketitle

\section*{Introduction}



In classical ring theory and noncommutative algebra, triangular matrix rings represent a very important construction and tool. Given rings (or algebras) $A,B$ and an $A$-$B$-bimodule $M$, one can form the ring of upper triangular matrices $\left(\begin{array}{cc} A & M \\ 0 & B \end{array}\right)$. This was one of the first sources of rings that are Noetherian or Artinian on one side and not on the other. More generally one can consider generalized matrix rings $\left(\begin{array}{cc} A & M \\ N & B \end{array}\right)$, with $N$ a $B$-$A$-bimodule, and ring structure given via two bimodule maps $\varphi:M\otimes_B N\rightarrow A$ and $N\otimes_AM\rightarrow B$, satisfying certain compatibility conditions. These rings are associated with Morita contexts (and in one to one bijection with them). ``Higher dimensional" versions of these rings are also very useful tools. 

\vspace{.3cm}

In this paper, we formally introduce and study generalized comatrix coalgebras, and triangular comatrix coalgebras, and use these to answer several open questions for coalgebras. The construction is natural to consider as a dual notion to generalized matrix rings; in fact, although not specifically written down anywhere, this construction is probably known for some time. For example, constructions involving certain coalgebras similar to triangular comatrix coalgebras were already used in \cite{I2,CI} to give examples and counterexamples in coalgebras, or in connection to duality problems. The dual algebras of such coalgebras are algebras which are generalized or upper triangular matrix rings, respectively. In fact, the (triangular) comatrix coalgebras can be also seen as generalization of the matrix rings. This is because a finite dimensional algebra $A$ which is a generalized matrix ring is always the dual of a generalized comatrix coalgebra $C$, and its category of left modules is the category of right comodules of $C$.

\vspace{.3cm}

We give the relation of generalized comatrix coalgebras to existing Morita theory for coalgebras (see \cite{Ber}; also \cite{Tak2}) in Section \ref{sec.1}. Some brief details are included for fixing notation of comatrix theory and for sake of completeness. In Section \ref{sec.n}, we use these generalized comatrix coalgebras to provide various examples and counterexamples concerning several important notions in coalgebra theory. Such are Artinian coalgebras and comodules, co-Noetherian comodules, coalgebras $C$ such that the dual algebra $C^*$ is Noetherian or ``almost" Noetherian, quasi-finite and strictly quasifinite coalgebras, finitely cogenerated comodules. Recall that a left comodule is {\it Artinian} if it is Artinian in the usual way, (or equivalently, as a right $C^*$-module). A coalgebra is left Artinian if it is Artinian as a left comodule (equivalently, $C^*$ is a left Noetherian ring). A (left or right) comodule $M$ is {\it finitely cogenerated} if it embeds $C^n$ for some positive integer $n$. A comodule $M$ is {\it co-Noetherian} if $M/N$ is finitely cogenerated for every subcomodule $N$ of $M$. The comodule $M$ is said to be {\it quasi-finite} if $\Hom^C(F,M)$ is finite dimensional for every finite dimensional comodule $N$, and it is called {\it strictly quasi-finite} if $M/N$ is quasi-finite for every subcomodule $N$ of $M$. 

\vspace{.3cm}

We recall the known connections between these notions in Section \ref{sec.n} and give some new ones, as well as provide examples to show all the existing inclusions of these classes are strict. For instance, we show that a comodule $M$ is Artinian if and only if $M^*$ is Noetherian, and that this is further equivalent to every $C^*$-submodule of $M^*$ being closed. This implies a Matlis type duality for the category of finitely generated left $C^*$-modules for a coalgebra $C$ such that $C^*$ is left Noetherian. We give equivalent conditions for a triangular comatrix coalgebra to be Artinian, dualizing (and extending) a result from triangular matrix rings. Using this, we construct an example of a left Artinian but not right Artinian coalgebra; its dual will be a ring which is left but not right Noetherian. We also investigate the inclusion between important classes of comodules $\{{\rm Artinian \, comodules}\} \subseteq \{{\rm co-Noetherian \, comodules}\}\subseteq \{{\rm strictly \, quasi-finite \, comodules}\}$, and show that all inclusions are strict. In fact, with the aid of triangular comatrix coalgebras we construct an example of a coalgebra which is left co-Noetherian but not left Artinian, left but not right co-Noetherian, and left but not right strictly quasi-finite, thus showing the importance of the concept.

\vspace{.3cm}

In the last section, we use triangular comatrix coalgebras and other techniques to answer another categorical flavored question studied in literature. Given a coalgebra $C$, the category of $C$-comodules $\Mm^C$ embeds in that of left $C^*$-modules ${}_{C^*}\Mm$. In general, given an abelian category $\Aa$ and a subcategory $\Cc$, one can ask the question of whether every object $X$ of $\Aa$ or in a certain subclass decomposes as a direct sum of its $\Cc$-torsion part $T(X)$ (= the sum of all subobjects of $X$ which belong to $\Cc$), and a complement. This is called the splitting problem, and was the subject of many investigations; such is, for example, the so called singular torsion, or classical torsion for modules over integral domains \cite{T1,T2,T3,Rot}. In general, one hopes to better understand the objects for which such a splitting occurs. The above mentioned embedding of $C$-comodules into $C^*$-modules provides another natural framework for this problem. This was studied in several places \cite{C,I1,IO,NT}; in \cite{NT,I1} it is shown that if all $C^*$-modules split, then $C$ must be finite dimensional. The situation for which only finitely generated $C^*$-modules split (the finite splitting property) is studied in \cite{C,IO}. If $C$ is cocommutative, and the rational part of every $C^*$-module splits off, then $C$ is a finite direct sum of serial and finite dimensional coalgebras \cite{C}. If $C^*$ is local and has this Rat-splitting property, then $C$ must be uniserial, and $C^*$ is a DVR; on the other hand, serial coalgebras do have the splitting property for f.g. modules \cite{IO}. However, the general situation remained open. We give the complete answer here, and classify coalgebras $C$ which have this splitting property for the rational part of every finitely generated left module over $C^*$ in Section 3. Our approach is self-contained and does not make use of previously known results for the colocal or cocommutative case, but only uses a few simple observations from \cite{IO}. Namely, we show that a coalgebra for which the rational part of every finitely generated left $C^*$-modules splits off must be a generalized triangular comatrix coalgebra $\left(\begin{array}{cc} D & M \\ 0 & E \end{array}\right)$, where $D$ is a serial coalgebra whose Ext-quiver is a finite disjoint union of cycles (equivalently, it is serial and almost connected), $E$ is a finite dimensional coalgebra and $M$ is a $D$-$E$-bicomodule (Theorem \ref{t.SplT}).\\ 
This will allow us to note again an interesting left-right broken symmetry: it is possible that $C$ has the finite splitting property for left $C^*$-modules, but not for the right $C^*$-modules. We construct many examples of combinatorial flavor in Section 4. The dual algebras of coalgebras in the class classified here include classical examples, such as (matrix rings containing) the algebra of formal power series, or complete path or monomial algebras of cyclic quivers, or more generally, of quivers whose cycles satisfy certain easy combinatorial conditions (Example \ref{e.4.6}).  

\section{Generalized Comatrix Coalgebras}\label{sec.1}

For the general theory of coalgebras and their comodules we refer to the classical textbooks \cite{A,DNR,M,Sw}. Often, for a vector space $M$, we will need to consider the finite topology on $M^*$ with a basis of neighborhoods of $0$ consisting of submodules of $M^*$ of the type $X^\perp=\{f\in M^*|f\vert_X=0\}$, for finite dimensional subcomodules of $M$. Any topological references will be understood with respect to this topology, unless otherwise specified. 

We begin the study with a procedure which allows us to obtain a class of examples which will be central to the classification and our results. It is also a construction that will provide many interesting examples. First recall that in general, if $A,B$ are rings and $P\in{}_A\Mm_B$ is an $A$-$B$-bimodule, $Q\in{}_B\Mm_A$ is a $B$-$A$-bimodule, 
$\varphi^*:P\otimes_BQ \rightarrow A$ - which we denote shortly $\varphi^*(p\otimes_Bq)=pq$ - is a morphism of $A$-bimodules, and $\psi^*:Q\otimes_AP\rightarrow B$ - which we denote shortly $\psi^*(q\otimes_Ap)=qp$ - is a morphism of $B$-bimodules, such that $(A,B,P,Q,\varphi^*,\psi^*)$ is a Morita context, then one can define the general matrix ring $\left(\begin{array}{cc} A & P \\ Q & B  \end{array} \right)$ with multiplication 
$$\left(\begin{array}{cc} a & p \\ q & b  \end{array} \right)\cdot\left(\begin{array}{cc} a' & p' \\ q' & b' \end{array} \right)=\left(\begin{array}{cc} aa'+pq' & ap'+pb' \\ qa'+bq' & bb'+qp' \end{array} \right).$$ 
In fact, the conditions of $(A,B,X,Y,\varphi^*,\psi^*)$ being a Morita context are equivalent to the associativity of this multiplication.

Now let us briefly present the dual construction for coalgebras, which is a natural dualization of that for rings. In fact, the two are essentially identical as particularizations of a more general construction in monoidal categories (see Remark \ref{r.1} below). We include some details to establish notation and for sake of completeness. Let $C,D$ be coalgebras, $X$ a $D$-$E$-bicomodule and $Y$ an $E$-$D$-bicomodule. Let us write $X\ni x\mapsto x_{-1}\otimes x_0\otimes x_1 \in D\otimes X\otimes E$ for the bicomodule structure of $X$ and $Y\ni y\mapsto y_{-1}\otimes y_0\otimes y_1\in E\otimes Y\otimes D$ for the bicomodule structure of $Y$; there will be no danger of confusion in the following construction. Also, let us consider two morphisms,  $\varphi:D\rightarrow X\square_EY$ of $D$-$D$-bicomodules and $\psi:E\rightarrow Y\square_DX$ of $E$-$E$ bicomodules, where $\square$ is the cotensor product of comodules (e.g. see \cite{DNR}); write $\varphi(d)=(\sum\limits_d)d^X\square_E d^Y$ (a symbolic sum) and $\psi(e)=(\sum\limits_e)e^Y\square_De^X$. Let $C$ be the generalized co-matrix coalgebra $C=\left(\begin{array}{cc} D & X \\ Y & E \end{array}\right)=D\oplus X\oplus Y\oplus E$ with comultiplication and counit given by

\begin{eqnarray*}
\Delta\left(\begin{array}{cc} d & x \\ y & e \end{array} \right) & = &
\left(\begin{array}{cc} d_1 & 0 \\ 0 & 0 \end{array} \right)\otimes \left(\begin{array}{cc} d_2 & 0 \\ 0 & 0  \end{array} \right) +
\left(\begin{array}{cc} 0 & d^X \\ 0 & 0 \end{array} \right)\otimes \left(\begin{array}{cc} 0 & 0 \\ d^Y & 0  \end{array} \right) + \\
&  & \left(\begin{array}{cc} x_{-1} & 0 \\ 0 & 0 \end{array} \right)\otimes \left(\begin{array}{cc} 0 & x_{0} \\ 0 & 0 \end{array} \right) + 
\left(\begin{array}{cc} 0 & x_{0} \\ 0 & 0 \end{array} \right)\otimes \left(\begin{array}{cc} 0 & 0 \\ 0 & x_{1} \end{array} \right) +\\
& & \left(\begin{array}{cc} 0 & 0 \\ y_0 & 0 \end{array} \right)\otimes \left(\begin{array}{cc} y_1 & 0 \\ 0 & 0 \end{array}\right) +
\left(\begin{array}{cc} 0 & 0 \\ 0 & y_{-1} \end{array} \right)\otimes \left(\begin{array}{cc} 0 & 0 \\ y_0 & 0 \end{array} \right) +\\
& & \left(\begin{array}{cc} 0 & 0 \\ e^Y & 0 \end{array} \right)\otimes \left(\begin{array}{cc} 0 & e^X \\ 0 & 0  \end{array} \right)+
\left(\begin{array}{cc} 0 & 0 \\ 0 & e_1 \end{array} \right)\otimes \left(\begin{array}{cc} 0 & 0 \\ 0 & e_2  \end{array} \right)\\
\varepsilon\left(\begin{array}{cc} d & x \\ y & e \end{array} \right) & = &\varepsilon_D(d)+\varepsilon_E(e)
\end{eqnarray*}

It is straightforward to check the fact that the comultiplication of $C$ is coassociative is equivalent to the fact that $(D,E,X,Y,\varphi,\psi)$ is a Morita-Takeuchi context \cite{Ber}. The conditions from \cite[Section 3, p.874]{Ber} on elements read here $x_0\otimes (x_1)^Y\otimes (x_1)^X=(x_{-1})^X\otimes (x_{-1})^Y\otimes x_0$ for $x\in X$ and $y_0\otimes (y_1)^X\otimes (y_2)^Y=(y_{-1})^Y\otimes y_{-1}^X\otimes y_0$ for $y\in Y$. Moreover, for example, when $X$ and $Y$ are finite dimensional, the dual ring is isomorphic to the classical generalized matrix ring associated to the Morita context $(D^*,E^*,X^*,Y^*,\varphi^*,\psi^*)$: $C^*=\left(\begin{array}{cc} D & X \\ Y & E \end{array} \right)^*\cong \left(\begin{array}{cc} D^* & X^* \\ Y^* & E^* \end{array} \right)$, where since $X\in{}^D\Mm^E$ and $Y\in{}^E\Mm^D$ we have $X\in{}_{E^*}\Mm_{D^*}$ so $X^*\in{}_{D^*}\Mm_{E^*}$, and similarly $Y\in{}_{D^*}\Mm_{E^*}$ so $Y^*\in{}_{E^*}\Mm_{D^*}$. Also, it is easy to generalize the construction to $n$ coalgebras $(C_i)_i$ and $C_i$-$C_j$-comodules $M_{ij}$ with $M_{ii}=C_i$, together with morphisms of $C_i$-$C_j$-bicomodules $\phi_{ij}:M_{ij}\rightarrow M_{ik}\square_{C_k}M_{kj}$ ($k\notin \{i,j\}$), satisfying appropriate conditions so that the comultiplication and counit induced similarly on the generalized comatrix coalgebra
$$
\left(\begin{array}{cccc}
C_1 & M_{12} & \dots & M_{1n} \\
M_{21} & C_2 & \dots & M_{nn} \\
\dots & \dots & \dots & \dots \\
M_{n1} & M_{n2} & \dots & C_{n}
\end{array}\right)
$$
are coassociative and counital.

First we observe a coalgebra decomposition analogue to the algebra situation. Although it is not required in its full generality for the purpose of the study of the splitting property (nor is the full generalized comatrix coalgebra), we include it again for sake of completeness. Recall from \cite{Rad2} that for an idempotent $e$ of $C^*$, the subspace $eCe$ of $C^*$ has a coalgebra structure given by $ece\longmapsto ec_1e\otimes ec_2e$ and counit equal to the restriction of the counit $\varepsilon$ of $C$ to $eCe$. Moreover, we have that the dual ring is $(eCe)^*\cong eC^*e$. Similarly, we note that if $e,f$ are idempotents of $C^*$ with $e+f=\varepsilon={\mathbf 1}_{C^*}$, then $eCf$ has a $fCf$-$eCe$ bicomodule structure given by $(ecf)_{(-1)}\otimes (ecf)_{(0)}\otimes (ecf)_{(1)}=fc_1f\otimes ec_2f\otimes ec_3e$ (this defines both the left and right comodule structures in such a way that they are compatible). Hence $eCf$ has a naturally induced structure of an $(eCe)^*$-$(fCf)^*$-bimodule. If we make the identifications $eC^*e=(eCe)^*$ and $fC^*f=(fCf)^*$, then this $(eCe)^*$-$(fCf)^*$-bimodule structure of $eCf$ becomes the usual $eC^*e$-$fC^*f$-bimodule structure $ec^*e\cdot exf\cdot fd^*f=ec^*exfd^*f$. Testing the bicomodule structure is a computation similar to that of \cite{Rad2}, and for the last part, we observe that
\begin{eqnarray*}
ec^*e\cdot exf\cdot fd^*f & = & (fd^*f)((exf)_{(-1)})(exf)_{(0)}(ec^*e)((exf)_{(1)})=\\
& = & (fd^*f)(fx_1f)[ex_2f](ec^*e)(ex_3e)\\
& = & f(x_1)f(x_2)d^*(x_3)f(x_4)f(x_5)f(x_6)x_7e(x_8)e(x_9)e(x_{10})\cdot \\
&  & \;\;\;\;\;\;\;\;\;\;\;\;\;\;\;\;\;\;\;\;\;\;\;\;\;\;\;\;\;\;\;\;\;\;\;\;\;\;c^*(x_{11})e(x_{12})e(x_{13})=\\
& = & f(x_1)d^*(x_2)f(x_3)x_4e(x_5)c^*(x_6)e(x_7) \,\,\,\, (e^2=e; f^2=f)\\
& = & (ec^*e)x(fd^*f)
\end{eqnarray*}
Moreover, we have bicolinear maps $\varphi:eCe\ni ece\longmapsto fc_1e\otimes ec_2f\in (fCe)\square_{fCf}(eCf)$ and $\psi:fCf\ni fcf\longmapsto ec_1f\otimes fc_2e\in (eCf)\square_{eCe}(fCe)$, such that $(eCe,fCf, fCe, eCf,\varphi,\psi)$ is a Morita-Takeuchi context. We have:

\begin{proposition}\label{p.eCf}
Let $C$ be a coalgebra and $C=X\oplus Y$ as left $C$-comodules. Let $e=\varepsilon\vert_X$ and $f=\varepsilon\vert_Y=\varepsilon-e$. Then $X=eC$, $Y=fC$ and 
$${C\cong\left(\begin{array}{cc}
	eCe & fCe \\
	eCf & fCf
\end{array}
\right)
}$$
as coalgebras.
\end{proposition}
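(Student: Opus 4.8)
The plan is to first identify $e$ and $f$ as a complete orthogonal system of idempotents of $C^*$, and then to recognize $C$ as the generalized comatrix coalgebra attached to the resulting Morita--Takeuchi context, using the constructions recalled before the statement. First I would check that $e$ acts on $C$ as the idempotent projection onto $X$ along $Y$. Since $X$ is a left subcomodule, for $x\in X$ we may write $\Delta(x)=x_1\otimes x_2$ with $x_2\in X$, and then $x_1e(x_2)=x_1\varepsilon(x_2)=x$, because $e$ agrees with $\varepsilon$ on $X$; likewise $y_1e(y_2)=0$ for $y\in Y$, since $e$ vanishes on $Y$. Hence the endomorphism of $C$ induced by $e$ has image $X$, that is $X=eC$, and symmetrically $Y=fC$ for the complementary idempotent $f=\varepsilon-e=\varepsilon\vert_Y$. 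Being complementary idempotent endomorphisms of $C$, these projections force $e$ and $f$ to be orthogonal idempotents with $e+f=\varepsilon$; therefore all of the structure recalled above applies, namely $eCe$ and $fCf$ are coalgebras, $fCe$ and $eCf$ carry the respective bicomodule structures, and $(eCe,fCf,fCe,eCf,\varphi,\psi)$ is a Morita--Takeuchi context, so the generalized comatrix coalgebra $\left(\begin{array}{cc} eCe & fCe \\ eCf & fCf\end{array}\right)$ is defined.

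Next, since $e+f=\varepsilon$ with $ef=fe=0$, the four corner operators $c\mapsto ece$, $c\mapsto fce$, $c\mapsto ecf$, $c\mapsto fcf$ are orthogonal idempotents summing to the identity, so $C=eCe\oplus fCe\oplus eCf\oplus fCf$ as a vector space; this is precisely the underlying space of the comatrix coalgebra, and I would let $\theta$ be the induced linear isomorphism from $C$ onto $\left(\begin{array}{cc} eCe & fCe \\ eCf & fCf\end{array}\right)$, sending $c$ to the matrix of its four block components. Counit compatibility is immediate, since the counit values of the diagonal blocks of $c$ are $e(c)$ and $f(c)$ (using $e^2=e$, $f^2=f$) while those of the off-diagonal blocks vanish (using $ef=fe=0$), so the comatrix counit sends $\theta(c)$ to $e(c)+f(c)=\varepsilon(c)$. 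It then remains to verify that $\theta$ carries $\Delta_C$ to the comultiplication of the comatrix coalgebra.

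For this I would decompose $C\otimes C$ into the sixteen blocks obtained by tensoring pairs of corners, and determine, for $z$ in a fixed corner, which of them $\Delta_C(z)$ can meet. The key observation is that it meets only two: starting from $\Delta_C(z)=z_1\otimes z_2$ and inserting the unit $\varepsilon=e+f$ of $C^*$ on the inner side of each tensor factor, coassociativity turns the ``mixed'' contributions into ones carrying a central factor $ef$ or $fe$, which vanish; so $\Delta_C(z)$ is the sum of its $e$-part and its $f$-part, where the $g$-part is got by hitting $z_1$ with $g$ on the side facing $z_2$ and $z_2$ with $g$ on the side facing $z_1$. Since $z_1$ retains the left index of the corner of $z$ while $z_2$ retains its right index, each of these two parts lies in a single block, matching the two terms that occur in the displayed comatrix comultiplication on that corner. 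Unwinding the definitions of the corner coalgebra structures of $eCe$ and $fCf$, of the one-sided comodule structure maps of $fCe$ and $eCf$, and of the context maps $\varphi$ and $\psi$ --- all of which were themselves defined by exactly such projections --- one then identifies the two parts with the two corresponding terms, so $\theta$ respects comultiplication and is an isomorphism of coalgebras. The main obstacle is precisely this last step: the sixteen-block bookkeeping, together with the corner-by-corner check that the two surviving parts of $\Delta_C$ reproduce the correct pair of terms of the comatrix comultiplication, in particular that the ``cross'' parts give $\varphi$ and $\psi$. Everything else is formal, and the whole argument is the precise dual of the classical identification of a ring equipped with a complete orthogonal set of idempotents as a generalized matrix ring.
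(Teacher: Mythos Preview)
Your proposal is correct and follows essentially the same approach as the paper: establish the vector space decomposition $C=eCe\oplus fCe\oplus eCf\oplus fCf$, define the obvious block map, and verify it is a coalgebra map by expanding $\Delta(c)$ through the idempotents and observing that only the eight terms with matching inner indices survive. The paper does this computation in one line by writing out $(ec_1e+ec_1f+fc_1e+fc_1f)\otimes(ec_2e+ec_2f+fc_2e+fc_2f)$ and matching the surviving terms to the comatrix comultiplication, whereas you give a more conceptual corner-by-corner explanation of the same reduction; the content is identical.
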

\begin{proof}
It is a straightforward copy and dualization of the well known algebra situation. One only needs to observe that $C=eCe\oplus fCe\oplus eCf\oplus fCf$ as vector spaces, and the isomorphism $C\ni c\leftrightarrow \left(\begin{array}{cc} ece & fce \\ ecf & fcf \end{array} \right)\in \left(\begin{array}{cc} eCe & fCe \\ 	eCf & fCf \end{array} \right)$ is then proven to be an isomorphism of coalgebras, using the above definition of a generalized comatrix coalgebra. Indeed, this follows noting that 
\begin{eqnarray*}
c_1\otimes c_2 & = & (ec_1e+ec_1f+fc_1e+fc_1f)\otimes(ec_2e+ec_2f+fc_2e+fc_2f)=\\
& = & ec_1e\otimes ec_2e+fc_1e\otimes ec_2f+fc_1f\otimes fc_2e+fc_1e\otimes ec_2e +\\
& & ec_1f\otimes ec_2e+fc_1f\otimes ec_2e+ec_1f\otimes fc_2e+fc_1f\otimes fc_2f
\end{eqnarray*}
These terms correspond precisely to the terms in the definition of the comultiplication of the generalized comatrix coalgebra above.
\end{proof}

Now we restrict the general comatrix coalgebra construction to a more special situation. Let $D,E$ be coalgebras and let $M$ be a $D-E$ bicomodule. We use Sweedler's convention with the summation symbol omitted. Write $m\mapsto m_{(0)}\otimes m_{(1)}$ for the right $E$-coaction on $M$ and $m\mapsto m_{[-1]}\otimes m_{[0]}$ for the left $D$-coaction on $M$. We consider the coalgebra of upper triangular co-matrices $\left(\begin{array}{cc} D & M \\ 0 & E \end{array}\right)=D\oplus M\oplus E$, with comultiplication $\Delta$ and counit $\varepsilon$ given by 
\begin{eqnarray*}
\Delta\left(\begin{array}{cc} d & m \\ 0 & e \end{array} \right) & = &
\left(\begin{array}{cc} d_1 & 0 \\ 0 & 0 \end{array} \right)\otimes \left(\begin{array}{cc} d_2 & 0 \\ 0 & 0  \end{array} \right) +
\left(\begin{array}{cc} m_{[-1]} & 0 \\ 0 & 0 \end{array} \right)\otimes \left(\begin{array}{cc} 0 & m_{[0]} \\ 0 & 0 \end{array} \right) + \\
& = & \left(\begin{array}{cc} 0 & m_{[0]} \\ 0 & 0 \end{array} \right)\otimes \left(\begin{array}{cc} 0 & 0 \\ 0 & m_{[1]} \end{array} \right) +
\left(\begin{array}{cc} 0 & 0 \\ 0 & e_1 \end{array} \right)\otimes \left(\begin{array}{cc} 0 & 0 \\ 0 & e_2  \end{array} \right)\\
\varepsilon\left(\begin{array}{cc} d & m \\ 0 & e \end{array} \right) & = &\varepsilon_D(d)+\varepsilon_E(e)
\end{eqnarray*}
These formulas give a coalgebra structure. In fact, the dual algebra is isomorphic to the upper triangular matrix algebra (ring) $\left(\begin{array}{cc} D & M \\ 0 & E \end{array} \right)^*\cong \left(\begin{array}{cc} D^* & M^* \\ 0 & E^* \end{array} \right)$, where $M\in{}^D\Mm^E$ implies $M\in{}_{E^*}\Mm_{D^*}$ so $M^*\in{}_{D^*}\Mm_{E^*}$. In general, if $A,B$ are rings and $P\in{}_A\Mm_B$ is an $A$-$B$-bimodule, then the upper triangular matrix ring is $\left(\begin{array}{cc} A & P \\ 0 & B \end{array} \right)=A\oplus M\oplus B$, with multiplication $\left(\begin{array}{cc} a & p \\ 0 & b  \end{array} \right)\cdot\left(\begin{array}{cc} a' & p' \\ 0 & b' \end{array} \right)=\left(\begin{array}{cc} aa' & ap'+pb' \\ 0 & bb' \end{array} \right)$ (the Morita context has the connecting maps equal to $0$). Let us note that these coalgebras appear in a common situation, which is in duality with a well known algebra fact:

\begin{proposition}\label{p.triang}
Let $C$ be a coalgebra, and $C=X\oplus Y$ as left $C$-comodules. Then $X$ is a subcoalgebra if and only if $\Hom^C(X,Y)=0$. Moreover, in this situation, if $e\in C^*$ is such that $e\vert_X=\varepsilon\vert_X$ and $e\vert_Y=0$, and $f=\varepsilon-e$ so $f\vert_Y=\varepsilon\vert_Y$, $f\vert_X=0$, then $X=eC=eCe$, $Y=fC$ and $C\cong\left(\begin{array}{cc} eCe & fCe \\ 0 & fCf \end{array}\right)$ as coalgebras.
\end{proposition}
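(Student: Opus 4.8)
The plan is to prove the equivalence by two short colinearity arguments, and then to read off the triangular matrix form from Proposition \ref{p.eCf}. I would first fix notation: the decomposition $C=X\oplus Y$ of left $C$-comodules comes with colinear projections $p\colon C\to X$ and $q\colon C\to Y$; since $X,Y$ are left subcomodules one has $\Delta(X)\subseteq C\otimes X$ and $\Delta(Y)\subseteq C\otimes Y$, while $e=\varepsilon\circ p$ and $f=\varepsilon\circ q=\varepsilon-e$ are the functionals of the statement, with $\ker q=X=\im p$. A quick check (apply $\mathrm{id}\otimes\varepsilon$ to the colinearity identity $\Delta p(c)=c_1\otimes p(c_2)$) gives $p(c)=c_1e(c_2)$, so $X=\im p=eC$ and $Y=\im q=fC$ in the notation recalled before Proposition \ref{p.eCf}.

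For the implication ``$X$ a subcoalgebra $\Rightarrow\Hom^C(X,Y)=0$'' I would take a colinear $\phi\colon X\to Y$ and write the coaction of $x\in X$ as $\Delta(x)=x_1\otimes x_2$ with $x_2\in X$. Colinearity of $\phi$ reads $\Delta(\phi(x))=x_1\otimes\phi(x_2)$; applying $\mathrm{id}\otimes\varepsilon$ gives $\phi(x)=\varepsilon(\phi(x_2))\,x_1$. Since $X$ is a subcoalgebra, $x_1\in X$, so $\phi(x)\in X\cap Y=0$ and $\phi=0$.

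For the converse, assume $\Hom^C(X,Y)=0$; the goal is to upgrade $\Delta(X)\subseteq C\otimes X$ to $\Delta(X)\subseteq X\otimes X$. Fix $x\in X$ and write $\Delta(x)=\sum_i a_i\otimes b_i$ with the $b_i\in X$ linearly independent. For $\psi\in X^*$ let $\widehat\psi\in C^*$ be its extension by $0$ on $Y$; the map $c\mapsto c_1\widehat\psi(c_2)$ is a colinear endomorphism of $C$, hence the composite $X\hookrightarrow C\xrightarrow{c\mapsto c_1\widehat\psi(c_2)}C\xrightarrow{q}Y$ is colinear and so zero. Evaluating at $x$ yields $\sum_i\psi(b_i)\,q(a_i)=0$ for every $\psi\in X^*$, so $q(a_i)=0$, i.e.\ $a_i\in\ker q=X$, for all $i$; thus $\Delta(x)\in X\otimes X$ and $X$ is a subcoalgebra.

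Finally, under these equivalent conditions, Proposition \ref{p.eCf} already supplies $X=eC$, $Y=fC$ and $C\cong\left(\begin{array}{cc}eCe&fCe\\eCf&fCf\end{array}\right)$ as coalgebras; since $C=Ce\oplus Cf$ we get $eC=eCe\oplus eCf$, so it remains only to kill the corner $eCf$. Here I would dualize: $X^\perp$ is the left ideal $C^*f$ of $C^*$, and ``$X$ a subcoalgebra'' is equivalent to $C^*f$ being two-sided, which unwinds to $f\ast C^*\ast e=0$ (convolution); pairing this against $C^*$ gives $eCf=0$, whence $X=eC=eCe$, the lower-left corner of the matrix is $0$, and $C\cong\left(\begin{array}{cc}eCe&fCe\\0&fCf\end{array}\right)$. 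I expect the main obstacle to be not any single deduction but the bookkeeping: keeping the left/right $C^*$-hit actions consistent, correctly labelling the four corners of the comatrix coalgebra, and invoking the perp/annihilator dictionary between subcoalgebras of $C$ and two-sided ideals of $C^*$. Once that is fixed, the genuine content is simply that colinearity of $c\mapsto c_1\widehat\psi(c_2)$ together with colinearity of $p$ and $q$ is exactly what forces $\Delta(X)$ into $X\otimes X$.
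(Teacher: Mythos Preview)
Your proof is correct, but the paper takes a shorter path for both parts. For the equivalence, instead of proving the two implications separately by explicit colinearity computations, the paper observes in one stroke that every left $C$-comodule endomorphism of $C$ is of the form $c\mapsto c^*\cdot c$ for some $c^*\in C^*$; hence $X$ is a subcoalgebra (i.e.\ also a right subcomodule, equivalently a left $C^*$-submodule) if and only if $h(X)\subseteq X$ for every $h\in\End({}^CC)$, which rewrites as $\Hom^C(X,C)=\Hom^C(X,X)$ and thus $\Hom^C(X,Y)=0$. Your converse argument is really this same idea with the isomorphism $\End({}^CC)\cong C^*$ left implicit, since your map $c\mapsto c_1\widehat\psi(c_2)$ is exactly the hit action of $\widehat\psi$. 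For the vanishing of $eCf$, where you pass to the dual and invoke the subcoalgebra/two-sided-ideal dictionary, the paper simply computes directly: for $x=ec\in X$ one has $x_1\otimes x_2\in X\otimes X$ (since $X$ is a subcoalgebra), so $ecf=xf=f(x_1)x_2=0$ because $f\vert_X=0$. Your route is sound and perhaps more illuminating about why the triangular shape appears on the dual side, but the paper's direct calculation is a one-liner.
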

\begin{proof}
Since $X$ is left $C$-subcomodule, it is a subcoalgebra if and only if it is also a right subcomodule, so a left $C^*$-submodule. That means that $c^*\cdot X\subseteq X$ for all $c^*\in C^*$. But since all morphisms of right $C^*$-modules (left $C$-comodules) $f:C\rightarrow C$ are of the form $f(x)=c^*\cdot x$ for some $c^*\in C^*$, we get that $X$ is a subcoalgebra if and only if $f(X)\subseteq X$ for all $f\in\End({}^CC)$. This means $\Hom^C(X,C)=\Hom^C(X,X)$, but since $\Hom^C(X,C)=\Hom^C(X,X)\oplus\Hom^C(X,Y)$, the condition is equivalent to $\Hom^C(X,Y)=0$. The last assertion follows since if $c\in C$ then $x=ec\in X$ so $ecf=xf=f(x_1)x_2=0$ because $x_1\otimes x_2\in X\otimes X$ since $X$ is a subcoalgebra, and $f\vert_X=0$. Hence $eCf=0$, and we can use the above Proposition \ref{p.eCf}.
\end{proof}

\begin{example}
Any semitrivial extension is a quotient of a generalized triangular comatrix coalgebra. Indeed, if $D$ is a coalgebra and $M$ is a $D$-bicomodule, the semitrivial extension $D\oplus M$ with comultiplication $(d,m)\mapsto (d_1,0)\otimes(d_2,0)+(m_{-1},0)\otimes(0,m_0)+(0,m_0)\otimes(m_1,0)$ and counit $\varepsilon(d,m)=
\varepsilon_D(d)$ can be seen as the quotient of $\left(\begin{array}{cc} D & M \\ 0 & D\end{array}\right)$ by the coideal $I=\{\left(\begin{array}{cc} d & 0 \\ 0 & -d\end{array}\right)|d\in D\}$. 
\end{example}

We now recall another well known fact. A left module $H$ over a generalized triangular matrix ring $\left(\begin{array}{cc} A & P \\ 0 & B \end{array} \right)$ is given by the following data: $X\in{}_A\Mm$ and $Y\in{}_B\Mm$ and $\varphi:P\rightarrow \Hom(Y,X)$ a morphism of $A-B$-bimodules, equivalently, a morphism of left $A$-modules $\psi:P\otimes_BY\rightarrow X$, and written $p\cdot y=\varphi(p)(y)=\psi(p\otimes_B y)$, such that $H=\left(\begin{array}{c}  X  \\  Y  \end{array} \right)=X\oplus Y$ with action $\left(\begin{array}{cc} a & p \\ 0 & b \end{array} \right)\cdot \left(\begin{array}{c} x  \\ y  \end{array} \right)=\left(\begin{array}{cc} ax+p\cdot y  \\ by  \end{array} \right)$. 

\begin{remark}\label{r.1}
We note that the above well known construction of triangular matrix rings is valid in any monoidal abelian category, that is, given $\Cc$ a monoidal abelian category, $A,B$ two algebras in $\Cc$ and $P$ an $A$-$B$-bimodule, we can form the generalized upper triangular algebra $H$ in $\Cc$ as before by letting $H=A\oplus P\oplus B$ with multiplication equal to the sum of all the structure maps $A\otimes A\rightarrow A$, $A\otimes P\rightarrow P$, $P\otimes B\rightarrow B$ and $B\otimes B\rightarrow B$ and unit given by the canonical map $({\mathbf 1}_\Cc\rightarrow A\oplus B)$ obtained from the units of $A$ and $B$. Similarly, the modules over $H$ have the same description as above as $X\oplus Y$ with $X$ a left $A$-module in $\Cc$ and $Y$ a left $B$-module in $\Cc$, and $\psi:P\otimes_BY\rightarrow X$ a morphism of left $A$-modules. Using this for the monoidal category ${}_K\Mm^{o}$ - the dual category to that of vector spaces - we get that generalized triangular comatrix coalgebras $C$ are in fact generalized triangular matrix algebras in ${}_K\Mm^{o}$ and consequently, left comodules over $H=\left(\begin{array}{cc} D & M \\ 0 & E \end{array}\right)$ are of the form $\left(\begin{array}{cc} X \\ Y \end{array}\right)$ with $X\in {}^D\Mm$, $Y\in {}^E\Mm$ and together with a morphism of left $D$-comodules $\psi:X\rightarrow M\square_EY$. This can of course be also determined by a direct computation parallel to that from the algebra case, and so we leave these details to the reader. However, this fact will not be necessary in what follows. 
\end{remark}

In the situation when $A=D^*$, $B=E^*$ and $P=M^*$, and $H=\left(\begin{array}{c}  X  \\  Y  \end{array} \right)$ is a left module over the upper triangular matrix algebra $\left(\begin{array}{cc} A & P \\ 0 & B \end{array} \right)$, let us define the $P$-rational part of $Y$ to be $Rat_P(Y)=\{y\in Y|\,\exists \sum\limits_kz_k\otimes m_k\in X\otimes M,{\rm\,such\,that\,}p\cdot y=\sum\limits_{k}p(m_k)z_k,\,\forall p\in P\}$. 

\begin{proposition}\label{p1}
If $C=\left(\begin{array}{cc} D & M \\ 0 & E \end{array} \right)$ is a coalgebra of upper triangular co-matrices and $C^*=\left(\begin{array}{cc} A & P \\ 0 & B \end{array} \right)$, then for any left $C^*$-module $H=\left(\begin{array}{c} X  \\  Y  \end{array} \right)$ we have
$$Rat_{C^*}\left(\begin{array}{cc}  X  \\  Y  \end{array} \right)=\left(\begin{array}{c} Rat_A(X)   \\ Rat_B(Y)\cap Rat_P(Y)   \end{array} \right)$$
\end{proposition}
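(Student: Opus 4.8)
The plan is to unravel the definition of the rational part for the upper triangular matrix algebra $C^* = \left(\begin{array}{cc} A & P \\ 0 & B \end{array}\right)$ with $C = \left(\begin{array}{cc} D & M \\ 0 & E \end{array}\right)$, and compare it with the claimed componentwise description. First I would recall that for a left $C^*$-module $N$, the rational part $Rat_{C^*}(N)$ is the largest $C$-subcomodule of $N$, equivalently $Rat_{C^*}(N) = \{n \in N \mid \exists\, \sum_i n_i \otimes c_i \in N \otimes C \text{ with } c^* \cdot n = \sum_i c^*(c_i) n_i \ \forall c^* \in C^*\}$. So I would fix $H = \left(\begin{array}{c} X \\ Y \end{array}\right)$ and an element $h = \left(\begin{array}{c} x \\ y \end{array}\right)$, and write an arbitrary element of $C = D \oplus M \oplus E$ as a triple; then expand the rationality condition $c^* \cdot h = \sum_i c^*(c_i) h_i$ using the explicit module action $\left(\begin{array}{cc} a & p \\ 0 & b \end{array}\right) \cdot \left(\begin{array}{c} x \\ y \end{array}\right) = \left(\begin{array}{c} ax + p\cdot y \\ by \end{array}\right)$ from the paragraph preceding Remark \ref{r.1}.

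The key computation is to separate the condition into its $X$-component and $Y$-component. Setting $p = 0$ and $b = \varepsilon_E$ isolates: the $Y$-component of rationality is exactly $b \cdot y = \sum b(e_i) y_i$ for a finite sum in $Y \otimes E$, which is precisely $y \in Rat_B(Y)$. Setting $a = \varepsilon_D$, $b = 0$ and letting $p$ range over $P = M^*$ isolates a condition on how $p \cdot y$ sits inside $X$: one needs $\sum_k z_k \otimes m_k \in X \otimes M$ with $p \cdot y = \sum_k p(m_k) z_k$, which is the definition of $Rat_P(Y)$. Finally, with $p = 0$, $b = 0$ and $a$ ranging over $A = D^*$, the condition on the $X$-component $ax$ is that $x \in Rat_A(X)$. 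The slightly delicate point is checking that these three conditions, gathered together, are equivalent to the single rationality condition for $h$ over all of $C^*$ simultaneously — i.e., that a witnessing coaction element $\sum_i h_i \otimes c_i \in H \otimes C$ for $h$ can be assembled from witnesses for $x \in Rat_A(X)$, for $y \in Rat_B(Y)$, and for $y \in Rat_P(Y)$, using that $C = D \oplus M \oplus E$ as a coalgebra decomposition dual to the block decomposition of $C^*$, so that the coaction respects the blocks. Here I would use the explicit comultiplication formula for the upper triangular comatrix coalgebra displayed above: the term $\Delta$ applied to an element of $M$ produces a $D \otimes M$ part and an $M \otimes E$ part, which matches exactly the fact that the $X$-component of $h$ receives contributions both from $D$ acting on $X$ and from $M$ "acting on" $Y$.

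I expect the main obstacle to be purely bookkeeping: one must be careful that the rationality witness $\sum h_i \otimes c_i$ lives in $H \otimes C$ and not merely that each block behaves well separately — in particular, verifying that if $x \in Rat_A(X)$ with witness in $X \otimes D$, and $y \in Rat_P(Y)$ with witness $\sum_k z_k \otimes m_k \in X \otimes M$, and $y \in Rat_B(Y)$ with witness in $Y \otimes E$, then the sum of these three (viewed inside $H \otimes C$ via the block inclusions) is a genuine $C$-comodule structure element for $h$, i.e. that applying the comultiplication and the relevant coaction axioms is consistent. This amounts to checking the coassociativity-type compatibility, but this is exactly guaranteed by the fact, already recorded in the excerpt, that these formulas do define a coalgebra structure on $C$ and a module structure on $H$ over $C^*$; so no genuinely new verification is needed beyond matching up the summands. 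The converse inclusion — that an element of $Rat_{C^*}(H)$ has both components rational in the stated senses — is the easy direction, obtained by projecting the witnessing tensor onto the appropriate block summands of $H \otimes C$ and testing against the block-supported functionals in $C^*$.
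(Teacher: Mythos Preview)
Your approach is essentially identical to the paper's: both directions are handled by the same block-testing strategy, specializing $c^*=\left(\begin{smallmatrix} a & p \\ 0 & b \end{smallmatrix}\right)$ to one nonzero block at a time to extract the three componentwise rationality conditions, and conversely assembling the three witnesses into a single element of $H\otimes C$ supported on $X\otimes D$, $X\otimes M$, and $Y\otimes E$ and verifying by direct computation that it witnesses rationality of $\left(\begin{smallmatrix} x \\ y \end{smallmatrix}\right)$. One small simplification: for the converse you only need the defining equation $c^*\cdot h=\sum_i c^*(c_i)h_i$ to hold for all $c^*$, not any coassociativity check, so the ``delicate point'' you flag is in fact a one-line verification (as the paper does) rather than a compatibility issue.
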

\begin{proof}
Assume that $\left(\begin{array}{c} x \\  y  \end{array} \right)\in Rat_{C^*}\left(\begin{array}{cc}  X  \\  Y  \end{array} \right)$ so there are $x_i\in X$, $y_i\in Y$, $d_i\in D$, $m_i\in M$, $e_i\in E$ such that for any $c^*=\left(\begin{array}{cc} a & p \\ 0 & b \end{array} \right)\in C^*$
\begin{eqnarray}
\left(\begin{array}{cc} a & p \\ 0 & b \end{array} \right)\cdot \left(\begin{array}{c} x \\ y  \end{array} \right)=\sum\limits_i<\left(\begin{array}{cc} a & p \\ 0 & b \end{array} \right);\left(\begin{array}{cc} d_i & m_i \\ 0 & e_i \end{array} \right)>\left(\begin{array}{c} x_i  \\ y_i  \end{array} \right)
\end{eqnarray}
(we write $<c^*,c>=c^*(c)$, for $c^*\in C^*$, $c\in C$). In particular, for $b=0$, $p=0$ and $a\in A$ we get $ax=\sum\limits_i a(d_i)x_i$ so $x\in Rat_A(X)$; for $a=0$, $p=0$ and $b\in B$ we get $by=\sum\limits_{i}b(e_i)y_i$ i.e. $y\in Rat_B(Y)$; when $a=0$, $b=0$ and $p\in P$ we get $p\cdot y=\sum\limits_ip(m_i)\otimes x_i$ so $y\in Rat_P(Y)$.\\
Conversely, let $x\in Rat_A(X)$ and $y\in Rat_B(Y)\cap Rat_P(Y)$, so there are $\sum\limits_ix_i\otimes d_i\in X\otimes D$, $\sum\limits_j y_j\otimes e_j\in Y\otimes E$, $\sum\limits_k z_k\otimes m_k\in X\otimes M$ such that $ax=\sum\limits_ia(d_i)x_i$, $by=\sum\limits_jb(e_j)y_j$, $p\cdot y=\sum\limits_kp(m_k)z_k$ for all $a\in A,b\in B, p\in P$. Then we see that 
\begin{eqnarray*}
\left(\begin{array}{cc} a & p \\ 0 & b \end{array} \right)\cdot \left(\begin{array}{c} x  \\  y  \end{array} \right) =
\left(\begin{array}{cc} ax + py \\  by \end{array} \right)
 & = & \sum\limits_i<\left(\begin{array}{cc} a & p \\ 0 & b \end{array} \right);\left(\begin{array}{cc} d_i & 0 \\ 0 & 0 \end{array} \right)>\left(\begin{array}{c} x_i  \\ 0   \end{array} \right) \\ 
\sum\limits_k<\left(\begin{array}{cc} a & p \\ 0 & b \end{array} \right);\left(\begin{array}{cc} 0 & m_k \\ 0 & 0 \end{array} \right)>\left(\begin{array}{c} z_k  \\ 0  \end{array} \right)
& + &
\sum\limits_j<\left(\begin{array}{cc} a & p \\ 0 & b \end{array} \right);\left(\begin{array}{cc} 0 & 0 \\ 0 & e_j \end{array} \right)>\left(\begin{array}{cc}  0  \\ y_j   \end{array} \right)
\end{eqnarray*}
which shows that $\left(\begin{array}{c} x  \\ y  \end{array} \right)\in Rat_{C^*}\left(\begin{array}{c} X   \\  Y  \end{array} \right)$
\end{proof}

\begin{remark}\label{r1}
If $P=M^*$ is finite dimensional then $Rat_P(Y)=Y$. Indeed, for $y\in Y$ let us look at the morphism of left $A$-modules $\psi:P\rightarrow X$, $\psi(p)=p\cdot y$. Let $p_k=m_k^*\in P$ be a basis of $P$ and $z_k=\psi(p_k)$. Then any $p\in P$ is written $p=\sum\limits_kp(m_k)p_k$ (dual bases $(p_k),(m_k)$) and so $p\cdot y=\psi(p)=\sum\limits_{k}p(m_k)\psi(p_k)=\sum\limits_kp(m_k)z_k$ i.e. $y\in Rat_P(Y)$. 
\end{remark}

A coalgebra $C$ is said to have the {\it left finite Rat-splitting property}, or the {\it left f.g. Rat-splitting property} (or shortly $C$ is {\it left finite Rat-splitting}) if the Rational torsion of every finitely generated left $C^*$-module $M$ splits off in $M$. The generalized triangular comatrix coalgebra offers a way to produce a large class of examples:

\begin{proposition}\label{p.TrSp}
Let $D$ be a coalgebra with the left finite Rat-splitting property, $E$ a finite dimensional coalgebra and $M$ a finite dimensional $D$-$E$-comodule. Then the upper triangular comatrix coalgebra $C=\left(\begin{array}{cc} D & M  \\ 0 & E \end{array} \right)$ has the left finite Rat-splitting property.
\end{proposition}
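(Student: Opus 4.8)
The plan is to reduce the claim to the splitting hypothesis on $D$ by means of Proposition~\ref{p1}. Write $A=D^*$, $B=E^*$ and $P=M^*$, so that $C^*=\left(\begin{array}{cc} A & P \\ 0 & B \end{array}\right)$; since $E$ and $M$ are finite dimensional, both $B$ and $P$ are finite dimensional. First I would take a finitely generated left $C^*$-module $H=\left(\begin{array}{c} X \\ Y \end{array}\right)$, generated say by $\left(\begin{array}{c} x_i \\ y_i \end{array}\right)$ with $1\le i\le n$, and apply to the generators the matrices $\left(\begin{array}{cc} a & 0 \\ 0 & 0 \end{array}\right)$, $\left(\begin{array}{cc} 0 & p \\ 0 & 0 \end{array}\right)$, $\left(\begin{array}{cc} 0 & 0 \\ 0 & b \end{array}\right)$. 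This shows that $X$ is generated over $A$ by the $x_i$ together with the finitely many vectors $p_k\cdot y_i$, where $p_k$ runs over a basis of $P$; hence $X$ is a finitely generated $A$-module, and $Y$ is a finitely generated, hence finite dimensional, $B$-module.

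Next I would identify the rational torsion of $H$. By Remark~\ref{r1}, $Rat_P(Y)=Y$ because $P$ is finite dimensional, and the same dual-basis argument applied to the finite dimensional coalgebra $E$ gives $Rat_B(Y)=Y$ (every $E^*$-module being rational). Substituting these into Proposition~\ref{p1} yields $Rat_{C^*}(H)=\left(\begin{array}{c} Rat_A(X) \\ Y \end{array}\right)$.

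Now I invoke the hypothesis on $D$: since $X$ is a finitely generated $D^*$-module, the left finite Rat-splitting property of $D$ furnishes an $A$-submodule $X'\subseteq X$ with $X=Rat_A(X)\oplus X'$. I claim that $N:=\left(\begin{array}{c} X' \\ 0 \end{array}\right)$ is a $C^*$-submodule of $H$ complementing $Rat_{C^*}(H)$. It is a submodule because $\left(\begin{array}{cc} a & p \\ 0 & b \end{array}\right)\cdot\left(\begin{array}{c} x' \\ 0 \end{array}\right)=\left(\begin{array}{c} ax' \\ 0 \end{array}\right)\in N$: the $p$-part acts on the zero lower entry and $X'$ is $A$-stable. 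Finally $H=Rat_{C^*}(H)\oplus N$ as vector spaces (immediate from $X=Rat_A(X)\oplus X'$), and both summands are $C^*$-submodules, so $Rat_{C^*}(H)$ splits off in $H$.

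The one genuinely delicate point is that the complement must be chosen of the form $\left(\begin{array}{c} X' \\ 0 \end{array}\right)$ with $X'$ an $A$-module complement of $Rat_A(X)$ inside $X$; a mere vector-space complement of the rational part would in general fail to be $C^*$-stable, so the argument really uses the splitting property of $D$ rather than just finiteness of $E$ and $M$, the latter serving only to ensure that $X$ is finitely generated over $D^*$ and that the lower rational part is all of $Y$. Everything else is routine bookkeeping with the triangular matrix action together with Proposition~\ref{p1} and Remark~\ref{r1}.
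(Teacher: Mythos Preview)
Your proof is correct and follows essentially the same approach as the paper's: both reduce to Proposition~\ref{p1} and Remark~\ref{r1}, establish that $X$ is finitely generated over $A=D^*$ and that $Y=Rat_B(Y)=Rat_P(Y)$, then use the splitting hypothesis on $D$ to produce an $A$-complement $X'$ and observe that $\left(\begin{smallmatrix} X' \\ 0 \end{smallmatrix}\right)$ is a $C^*$-submodule. The only cosmetic difference is that you use a $K$-basis of the finite dimensional $P=M^*$ to show $X$ is finitely generated, whereas the paper phrases this via a set of $D^*$-module generators of $M^*$; under the stated finite-dimensionality hypothesis these are interchangeable.
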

\begin{proof}
Let $H=\left(\begin{array}{c}  X  \\  Y  \end{array} \right)$ be a finitely generated left $C^*$-module, and $\left(\begin{array}{c} x_i  \\ y_i  \end{array} \right)_{i}$ a finite system of generators, so for any $\left(\begin{array}{c}  x  \\  y  \end{array} \right)\in H$ there is a linear combination $\left(\begin{array}{c}  x \\ y   \end{array} \right)=\sum\limits_i\left(\begin{array}{cc} a_i & p_i \\ 0 & b_i \end{array} \right)\cdot \left(\begin{array}{c} x_i  \\ y_i  \end{array} \right)$. First we note that for $x=0$, this formula shows that any $y\in Y$ can be written as $y=\sum\limits_{i}b_iy_i$ so $Y$ is finitely generated over $B=E^*$ by the $y_i$'s. Consequently, $Y$ is finite dimensional since $B$ is so. Fixing $y=0$, for every $x$ we get an expression $x=\sum\limits_ia_ix_i+p_i\cdot y_i$. Since $M^*$ is finitely generated as left $D^*$-module, we can write $p_i=\sum\limits_ja_{ij}q_j$, $a_{ij}\in D^*$ with $q_j\in M^*$ fixed generators, and so $x=\sum\limits_ia_ix_i+\sum\limits_{i,j}(a_{ij}q_j)\cdot y_i=\sum\limits_ia_ix_i+\sum\limits_{i,j}a_{ij}(q_j\cdot y_i)$. Hence $X$ is finitely generated by the $x_i$'s and the $(q_j\cdot y_i)$'s. Thus $X=Rat_A(X)\oplus X'$ as left $A$-modules, and $Y=Rat_B(Y)=Rat_P(Y)$, because both $B=E^*$ and $P=M^*$ are finite dimensional. Thus $Rat_{C^*}\left(\begin{array}{c} X  \\  Y  \end{array} \right)=\left(\begin{array}{c} Rat_A(X)   \\ Rat_B(Y)   \end{array} \right)$ by Proposition \ref{p1}. But now it is easy to note that $\left(\begin{array}{c}  X' \\  0  \end{array} \right)$ is a left $C^*$-submodule of $H$. Therefore 
$$H=\left(\begin{array}{cc} X  \\  Y  \end{array} \right)=\left(\begin{array}{cc} Rat_A(X)  \\  Rat_B(Y)  \end{array} \right)\oplus \left(\begin{array}{cc}  X' \\  0  \end{array} \right)$$
i.e. $Rat_{C^*}(H)$ splits off in $H$.
\end{proof}

\section{Noetherian and Artinian Objects}\label{sec.n}

We use the constructions of the previous Section to give applications to several important notions in coalgebra theory, related to Noetherian and Artinian objects. In fact, the situation when the dual algebra $C^*$ of an algebra $C$ is Noetherian will be important in our study. Hence we include a brief study of Artinian $C$-comodules and Noetherian $C^*$-modules. A coalgebra is said to be called \emph{left (respectively, right) Artinian} if it is Artinian as a left (respectively, right) $C$-comodule. We recall that a coalgebra is left Artinian if and only if $C$ is Artinian as a right (respectively left) $C^*$-module, equivalently, $C^*$ is a left (respectively, right) Noetherian algebra. We first note that we have an extended version of \cite[Proposition 2.5]{IO}.

\begin{proposition}
Let $C$ be a left Artinian coalgebra (so $C^*$ is left Noetherian). Then for every left (and also every right) $C^*$-module $M$, $Rat(M)$ is the sum of all finite dimensional submodules of $M$, $Rat(M)=\{x\in M|C^*\cdot x{\rm\,is\,finite\,dimensional}\}$.
\end{proposition}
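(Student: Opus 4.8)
The plan is to prove that $Rat(M)$ equals $R(M):=\{x\in M\mid C^*\cdot x\text{ is finite dimensional}\}$. First, $R(M)$ is a $C^*$-submodule of $M$ (because $C^*\cdot(x+y)\subseteq C^*\cdot x+C^*\cdot y$ and $C^*\cdot(c^*x)\subseteq C^*\cdot x$), it contains every finite dimensional submodule, and it is the union of the finite dimensional submodules $C^*\cdot x$ with $x\in R(M)$; hence $R(M)$ is exactly the sum of all finite dimensional submodules of $M$. The inclusion $Rat(M)\subseteq R(M)$ holds over an arbitrary coalgebra: if $x$ is rational with $c^*\cdot x=\sum_i c^*(c_i)x_i$ for all $c^*\in C^*$, then $C^*\cdot x$ lies in the span of the $x_i$. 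For the reverse inclusion, it suffices to establish the \emph{Claim:} every finite dimensional left $C^*$-module is rational. Indeed, granting the Claim, for any $x\in R(M)$ the submodule $C^*\cdot x$ is finite dimensional, hence rational, hence contained in the largest rational submodule $Rat(M)$; thus $x\in Rat(M)$ and $R(M)\subseteq Rat(M)$.

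The Claim will be obtained in three steps. \emph{Step 1 (the key point): the coradical $C_0$ is finite dimensional.} Since $C$ is left Artinian, it is Artinian as a left $C$-comodule; its socle is then an Artinian semisimple comodule, i.e. a finite direct sum of simple comodules, and since each simple comodule is finite dimensional and the socle of $C$ equals $C_0$, we conclude $\dim_K C_0<\infty$. (Alternatively: were $C_0$ infinite dimensional, its decomposition into simple subcomodules would yield a strictly descending chain of left subcomodules of $C$, contradicting left Artinianness.) \emph{Step 2.} Using the standard identifications $C^*/C_0^{\perp}\cong C_0^{*}$ and $J(C^*)=C_0^{\perp}$ (see \cite{DNR}), Step 1 gives that $C^*/J(C^*)\cong C_0^{*}$ is a finite dimensional, hence semisimple Artinian, algebra. \emph{Step 3.} Let $S$ be a simple left $C^*$-module. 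As $S$ is cyclic, Nakayama's lemma forces $J(C^*)\cdot S=0$ (it cannot equal $S$), so $S$ is a module over $C^*/J(C^*)\cong C_0^{*}$; since $C_0$ is a finite dimensional subcoalgebra of $C$, every $C_0^{*}$-module is a $C_0$-comodule and hence a rational $C^*$-module, so $S$ is finite dimensional and rational. Finally, an arbitrary finite dimensional module $N$ is rational by induction on $\dim_K N$: choose a simple submodule $S\subseteq N$ (rational by Step 3), use the inductive hypothesis on $N/S$, and deduce that $N$ is rational since the class $\Mm^C$ of rational modules is closed under extensions in ${}_{C^*}\Mm$ (\cite{DNR}).

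The main obstacle is Step 1: deriving finite dimensionality of the coradical from left Artinianness of $C$ (equivalently, left Noetherianness of $C^*$). Once this is in hand, everything else is an assembly of standard facts — the description $J(C^*)=C_0^{\perp}$, Nakayama's lemma, the identification of finite dimensional $C_0$-comodules with $C_0^{*}$-modules, and the closure of the rational class under extensions. The only point to watch in the reduction is to read off rationality of an element solely from the finite dimensional cyclic submodule it generates, so that the passage ``Claim $\Rightarrow$ $R(M)\subseteq Rat(M)$'' is free of circularity.
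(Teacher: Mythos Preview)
Your proof is correct. Both you and the paper reduce to the same key claim --- every finite dimensional $C^*$-module is rational --- but reach it by different routes. The paper argues: $C^*$ left Noetherian $\Rightarrow$ left almost Noetherian $\Rightarrow$ $C$ is coreflexive (citing Radford \cite{Rad}), and coreflexivity is exactly the statement that every cofinite ideal is closed, i.e.\ every finite dimensional $C^*$-module (left or right) is rational. Your route is more self-contained: you extract from left Artinianness the stronger structural fact that $C_0$ is finite dimensional, then use $J(C^*)=C_0^{\perp}$ and Nakayama to see that every simple is a $C_0^*$-module and hence rational, and finish by closure under extensions. Your argument avoids the black-box appeal to Radford's coreflexivity theorem at the cost of a short induction; it also makes transparent why the conclusion holds for \emph{right} $C^*$-modules as well (since $C_0$ finite dimensional and $J(C^*)$ two-sided are left-right symmetric), a point you might state explicitly. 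The paper's approach is terser and situates the result within the general theory of coreflexive coalgebras.
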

\begin{proof}
Since $C^*$ is left Noetherian, it is also left almost Noetherian (i.e. cofinite ideals are finitely generated), which implies that $C$ is coreflexive (see \cite{Rad}), that is, every cofinite ideal $I$ is closed, i.e. it is the $\perp$ of some $X\subset C$, $I=X^\perp$. Hence every finite dimensional $C$-comodule (left or right) is rational, and so the conclusion follows.
\end{proof}



A right $C$-comodule $M$ is called {\it finitely cogenerated} if it embeds in a coproduct of finitely many copies of $C$. We note:

\begin{proposition}\label{p.fcgen}
A right $C$-comodule $M$ is finitely cogenerated if and only if $M^*$ is a finitely generated $C^*$-module.
\end{proposition}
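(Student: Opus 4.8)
The plan is to realize finitely many generators of $M^*$ as the components of an embedding of $M$ into a finite power of $C$, and to run the converse via $K$-linear duality. The main tool is the standard device attaching to a functional a comodule map into $C$: for $f\in M^*$ set $\phi_f\colon M\to C$, $\phi_f(m)=f(m_{(0)})m_{(1)}$, where $\rho(m)=m_{(0)}\otimes m_{(1)}$ is the right $C$-coaction. A one-line coassociativity computation shows $\phi_f$ is a morphism of right $C$-comodules, and the counit axiom gives $\varepsilon\circ\phi_f=f$; in fact $f\mapsto\phi_f$ identifies $M^*$ with $\Hom^C(M,C)$ as $C^*$-modules, though I will only need the explicit formula. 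Recall also that under the embedding $\Mm^C\hookrightarrow{}_{C^*}\Mm$ the induced $C^*$-module structure on $M^*$ is $(f\cdot c^*)(m)=f(m_{(0)})c^*(m_{(1)})$, i.e. $c^*$ acts by evaluation on the $C$-leg of $\rho(m)$; this is the identity I will exploit.

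For the direction ``$M$ finitely cogenerated $\Rightarrow$ $M^*$ finitely generated'': a comodule monomorphism $M\hookrightarrow C^n$ is in particular a $K$-linear injection and a morphism of $C^*$-modules, so applying the exact functor $(-)^*=\Hom_K(-,K)$ yields an epimorphism of $C^*$-modules $(C^*)^n\cong (C^n)^*\twoheadrightarrow M^*$; hence $M^*$ is generated by at most $n$ elements.

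For the converse, choose generators $f_1,\dots,f_n$ of the $C^*$-module $M^*$ and form $\Phi=(\phi_{f_1},\dots,\phi_{f_n})\colon M\to C^n$, which is a morphism of right $C$-comodules since each $\phi_{f_i}$ is. It suffices to show $\Phi$ is injective, for then $M$ is a subcomodule of $C^n$, i.e. finitely cogenerated. If $m\in\ker\Phi$, then $f_i(m_{(0)})m_{(1)}=0$ in $C$ for every $i$; writing an arbitrary $f\in M^*$ as $f=\sum_i f_i\cdot c_i^*$ with $c_i^*\in C^*$ gives $f(m)=\sum_i f_i(m_{(0)})c_i^*(m_{(1)})=\sum_i c_i^*\!\big(f_i(m_{(0)})m_{(1)}\big)=0$. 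Since the functionals on the $K$-space $M$ separate points, $m=0$.

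The argument is essentially formal, so I do not expect a genuine obstacle; the only points demanding care are the bookkeeping of the left/right $C^*$-module conventions, the verification that $\phi_f$ is a comodule morphism, and the check that $(-)^*$ carries the comodule embedding to a module epimorphism (in particular that $(C^n)^*\cong(C^*)^n$ as $C^*$-modules), all of which are routine once conventions are fixed.
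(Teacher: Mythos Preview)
Your proof is correct and essentially identical to the paper's: both directions use the same arguments, and in particular your map $\Phi=(\phi_{f_1},\dots,\phi_{f_n})$ is exactly the paper's map with components $u_i(m)=f_i(m_0)m_1$, whose injectivity (equivalently $\bigcap_i\ker u_i=0$) is verified by the same computation $f(m)=\sum_i c_i^*\big(f_i(m_{(0)})m_{(1)}\big)=0$. The only cosmetic difference is that the paper phrases the conclusion via $M\cong M/\bigcap_i M_i\hookrightarrow\bigoplus_i M/M_i\hookrightarrow C^n$ rather than directly asserting $\Phi$ is injective.
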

\begin{proof}
If $M\hookrightarrow C^n$ is a monomorphism, dualizing we get an epimorphism $(C^*)^n\rightarrow M^*\rightarrow 0$. Conversely, let $f_1,\dots,f_n$ generate the right $C^*$-module $M^*$. Let $u_i:M\rightarrow C$ be defined by $u_i(m)=f_i(m_0)m_1$. It is well known and easy to see that $u_i$ are morphisms of right comodules, and so $M_i=\Ker(u_i)$ are subcomodules of $M$. Thus, we have embeddings $M/M_i\hookrightarrow C$. Also, $\bigcap\limits M_i=0$. Indeed, if $x\in\bigcap\limits_iM_i$, then for every $f\in M^*$, we have $f=\sum\limits_if_ic^*_i$, $c^*_i\in C^*$ and so $f(x)=\sum\limits_i(f_ic^*_i)(x)=\sum\limits_if_i(c^*_i\cdot x)=\sum\limits_if_i(x_0)c_i(x_1)=0$, because $x\in \Ker(f_i),\,\forall i$. So $x=0$ because $f(x)=0$, for all $f\in M^*$. Hence, we have a monomorphism of right $C$-comodules
$$M=\frac{M}{\bigcap\limits_iM_i}\hookrightarrow \bigoplus\limits_{i=1}^n\frac{M}{M_i}\hookrightarrow C^n$$  
\end{proof}

The equivalence of (ii) and (iii) in the following result is known from \cite{HR74}; also, it is part of \cite[Lemma 3.2]{NT}. A particular version of this proposition for coalgebras is proved in \cite[Lemma 3.4]{HIT}. We provide here a general proof for comodules, generalizing all the above.

\begin{proposition}\label{p.Noeth}
The following are equivalent for a left $C$-comodule $M$.\\
(i) Every submodule of $M^*$ is closed (in the finite topology of $M^*$). \\
(ii) $M^*$ is Noetherian.\\
(iii) $M$ is Artinian. 
\end{proposition}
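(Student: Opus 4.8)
The plan is to prove the chain of implications $(iii)\Rightarrow(ii)\Rightarrow(i)\Rightarrow(iii)$, using the finite topology on $M^*$ and the standard perpendicular-annihilator correspondence between subcomodules of $M$ and certain subspaces of $M^*$. Recall that for a subcomodule $N\subseteq M$ one has the closed submodule $N^\perp=\{f\in M^*\mid f|_N=0\}$, and that $M^*/N^\perp\cong N^*$ as $C^*$-modules; dually, for a closed submodule $L\subseteq M^*$ we set $L^\perp=\{m\in M\mid f(m)=0\ \forall f\in L\}$, which is a subcomodule. The key fact I would establish first is that $N\mapsto N^\perp$ is an inclusion-reversing bijection between subcomodules of $M$ and \emph{closed} submodules of $M^*$, with inverse $L\mapsto L^\perp$; this uses that every subcomodule of $M$ is the union of its finite-dimensional subcomodules, and that a submodule $L$ is closed iff $L=(L_\perp)^\perp$.

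For $(iii)\Rightarrow(ii)$: assume $M$ is Artinian, and take an ascending chain $L_1\subseteq L_2\subseteq\cdots$ of submodules of $M^*$. Passing to closures, $\overline{L_1}\subseteq\overline{L_2}\subseteq\cdots$ is an ascending chain of \emph{closed} submodules, hence corresponds under $(\cdot)_\perp$ to a descending chain of subcomodules $(\overline{L_1})_\perp\supseteq(\overline{L_2})_\perp\supseteq\cdots$ of $M$, which stabilizes since $M$ is Artinian. So the chain of closures stabilizes. The remaining point is that $\overline{L_n}=\overline{L_{n+1}}$ forces $L_n=L_{n+1}$; this is where I expect the only real subtlety. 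One argues it locally: for any finite-dimensional subcomodule $F\subseteq M$, the image of $L_n$ in $F^*=M^*/F^\perp$ equals the image of $\overline{L_n}$ there (density), so the images of $L_n$ and $L_{n+1}$ in every such finite-dimensional quotient $F^*$ coincide. Then $f\in L_{n+1}$ has, for each $F$, a representative in $L_n$ modulo $F^\perp$; one needs that this suffices to conclude $f\in L_n$ — and here I would either invoke that a submodule which is dense and whose quotient image is "complete enough" is already everything, or more cleanly: reduce to the fact that $M^*$ is Noetherian iff $C^*$-submodules are finitely generated, and note that a finitely generated submodule is automatically closed. That observation is exactly what makes $(i)\Leftrightarrow(ii)$ run, so I would in fact prove $(i)\Rightarrow(ii)$ and $(ii)\Rightarrow(i)$ and fold the above into them.

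So the cleaner route: prove $(ii)\Rightarrow(i)$ by showing a finitely generated submodule $L=\sum_{i=1}^k C^*f_i$ of $M^*$ is closed. For this, consider the maps $u_i:M\to C$, $u_i(m)=f_i(m_0)m_1$ as in Proposition~\ref{p.fcgen}; each $f_i$ factors as $f_i=\varepsilon\circ u_i$ composed appropriately, and the submodule generated by $f_i$ is $\{c^*\cdot f_i\}=\{$ pullbacks along $u_i$ of elements of $C^*\}$, whose annihilator is $\Ker(u_i)$, a subcomodule; hence $C^*f_i$ is closed, being the $\perp$ of $\Ker(u_i)$, and a finite sum of closed submodules of $M^*$ — with each a cofinite-codimension-controlled piece — is closed. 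Then $(i)\Rightarrow(iii)$: if every submodule of $M^*$ is closed, then by the bijection above every submodule of $M^*$ is the $\perp$ of a unique subcomodule of $M$; a strictly descending chain of subcomodules of $M$ would give a strictly ascending chain of closed submodules of $M^*$, and I must show the latter cannot be infinite — which follows because $(i)$ says \emph{every} submodule is closed, so closed submodules of $M^*$ are in order-reversing bijection with \emph{all} subcomodules of $M$, and if $M$ were not Artinian one still needs to manufacture a non-closed submodule; the standard trick is to take a strictly descending chain $N_1\supsetneq N_2\supsetneq\cdots$ and check that $\bigcup_n N_n^\perp$ is \emph{not} closed (its closure is $(\bigcap_n N_n)^\perp$, which is strictly larger), contradicting $(i)$. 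Finally $(ii)\Rightarrow(iii)$ is then immediate from $(ii)\Rightarrow(i)\Rightarrow(iii)$, and $(iii)\Rightarrow(ii)$ from $(iii)\Rightarrow(i)$ — so it suffices to also note $(iii)\Rightarrow(i)$: if $M$ is Artinian then subcomodules satisfy DCC, and any submodule $L\subseteq M^*$ equals $\overline{L}=(L_\perp)^\perp$ because $L_\perp\supseteq (\overline L)_\perp$ and equality of these subcomodules (both obtained by intersecting the kernels of the functionals in $L$) is forced; the Artinian hypothesis guarantees $L_\perp$ is an honest subcomodule and the density argument closes the gap. The main obstacle throughout is the density/closure step — ensuring that "$L$ and $\overline L$ have the same image in every finite-dimensional quotient" genuinely upgrades to $L=\overline L$ when a finiteness hypothesis (DCC on subcomodules, or finite generation) is present; everything else is the routine bookkeeping of the $\perp$-correspondence.
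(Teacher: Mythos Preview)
Your proposal eventually lands on the same cycle the paper uses, $(ii)\Rightarrow(i)\Rightarrow(iii)\Rightarrow(ii)$, but two of the steps as you have written them have real gaps.

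For $(i)\Rightarrow(iii)$: you assert that for a strictly descending chain $N_1\supsetneq N_2\supsetneq\cdots$ the union $\bigcup_n N_n^\perp$ is strictly contained in its closure $(\bigcap_n N_n)^\perp$. This is precisely the content of the implication, and ``check that'' does not suffice. The paper supplies the missing argument: choose $x_n\in N_n\setminus N_{n+1}$, verify that the sum $\sum_n \KK x_n + \bigcap_n N_n$ is direct (an easy induction using the nesting of the $N_n$), and then define a linear functional $f$ with $f(x_n)=1$ for all $n$ and $f\vert_{\bigcap N_n}=0$. This $f$ lies in $(\bigcap_n N_n)^\perp$ but in no $N_n^\perp$, so the union is not closed. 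Without this construction your step is an assertion, not a proof.

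For the remaining implication: your direct attempt at $(iii)\Rightarrow(i)$ is not a valid argument. The equality $L_\perp=(\overline L)_\perp$ holds for \emph{any} submodule $L$ and uses nothing about $M$ being Artinian; it therefore cannot force $L=\overline L$. Drop this and instead make explicit the $(iii)\Rightarrow(ii)$ argument you only hint at mid-paragraph: if $M^*$ has a non-finitely-generated submodule, build a strictly ascending chain $P_1\subsetneq P_2\subsetneq\cdots$ of finitely generated submodules; each $P_n$ is closed (this is the fact you cite for $(ii)\Rightarrow(i)$), so $P_n=X_n^\perp$ with $X_n=(P_n)_\perp$, and the $X_n$ form a strictly descending chain of subcomodules of $M$, contradicting Artinianness. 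This is exactly the paper's proof of $(iii)\Rightarrow(ii)$, and it closes the cycle cleanly without the density hand-waving.
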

\begin{proof}
(ii)$\Rightarrow$(i) is easy, since any finitely generated left submodule of $M^*$ is closed (this is a well known fact; one can also see \cite[Lemma 1.1]{I}).\\
(i)$\Rightarrow$(iii) Let $(X_n)_n$ be a descending chain of subcomodules of $M$; then $N=\sum\limits_nX_n^\perp\subseteq M^*$ is closed, so $\sum\limits_nX_n^\perp=X^\perp$, where $X$ is a subcomodule in $M$ since $N$ is a $C^*$-submodule of $M^*$. Note first that $X\subseteq\bigcap\limits_nX_n$. Assuming that the chain $X_1\supseteq X_2\supseteq\dots$ does not terminate, we can choose $x_n\in X_n\setminus X_{n+1}$, and we can easily see that the sum of vector spaces $\sum\limits_nKx_n+X$ is direct. This allows us to get an $f\in C^*$ such that $f(x_n)=1$, for all $n$ and $f\vert_X=0$ (completed suitably to a linear function on $C$). But then $f\in X^\perp$ and $f\notin X_n^\perp=I_n$ for all $n$, so $f\notin\bigcup\limits_nX_n^\perp=\sum\limits_nX_n^\perp=N$, a contradiction. Therefore $X_n=X_{n+1}=...$ from some $n$ onwards. \\
(iii)$\Rightarrow$(ii) Let $P\subseteq M^*$ be a submodule and assume it is not finitely generated; then there is a sequence of submodules $P_n=\sum\limits_{i=1}^nC^*a_i$ of $M^*$ with $a_{n+1}\notin P_n$ for all $n$, so $P_1\subsetneq P_2\subsetneq\dots\subsetneq P_n\subsetneq\dots$ is an ascending chain of submodules. They are closed (since they are finitely generated), so $P_n=X_n^\perp$ with $X_n=P_n^\perp$, and we have a strictly descending chain $X_1\supsetneq X_2\supsetneq\dots\supsetneq X_n\supsetneq\dots$ of left subcomodules of $M$ (since if $X_n=X_{n+1}$ then $P_n=X_n^\perp=X_{n+1}^\perp=P_{n+1}$) which is a contradiction.
\end{proof}

Note that this proposition shows that a pseudocompact $C^*$-module is Noetherian in the category of left pseudocompact $C^*$-modules, if and only if it is Noetherian as a $C^*$-module (recall that the category of pseudocompact left $C^*$ modules is the dual of the category of left $C$-comodules; see \cite[Section 2.6]{DNR}). We also have

\begin{corollary}\label{c.duality}
If $C$ is a left Artinian coalgebra (equivalently, $C^*$ is a left Noetherian algebra), then the categories of Noetherian left $C^*$-modules $C^*{\rm-Noeth}$ (or finitely generated left modules) and left Artinian $C$-comodules $C{\rm-Art}$ (or finitely cogenerated left $C$-comodules) are in duality.
\end{corollary}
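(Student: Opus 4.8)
The statement is Corollary \ref{c.duality}, asserting a duality between Noetherian left $C^*$-modules and Artinian left $C$-comodules when $C$ is left Artinian. The plan is to produce this duality as a restriction of the standard contravariant functors between comodules and their full linear duals, namely $M\mapsto M^*$ from left $C$-comodules (equivalently, left pseudocompact $C^*$-modules) to left $C^*$-modules, and $N\mapsto N^\circ$ (the set of continuous functionals, or equivalently the rational part $\mathrm{Rat}(N^*)$) going the other direction. The key point is to verify these functors land in the claimed subcategories and are mutually quasi-inverse there.

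First I would record that for any left $C$-comodule $M$, the dual $M^*$ is a left $C^*$-module, and by Proposition \ref{p.Noeth}, since $M$ is Artinian, $M^*$ is Noetherian. Conversely, by Proposition \ref{p.fcgen}, $M$ is finitely cogenerated if and only if $M^*$ is finitely generated; so the two descriptions of the categories in the statement (Noetherian = finitely generated? — not in general, but over a left Noetherian ring $C^*$ a module is Noetherian iff finitely generated, and dually a left $C$-comodule over a left Artinian $C$ is Artinian iff finitely cogenerated, which is exactly the content needed) match up. I would make explicit that left Noetherianness of $C^*$ is what makes ``Noetherian'' and ``finitely generated'' coincide in $C^*\text{-}\mathrm{Noeth}$, and dually. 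Then I would invoke the fact that $C$ is coreflexive (shown in the first Proposition of Section \ref{sec.n}: $C^*$ left Noetherian $\Rightarrow$ left almost Noetherian $\Rightarrow$ $C$ coreflexive), so every cofinite ideal is closed.

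Next I would set up the quasi-inverse. Starting from a Noetherian left $C^*$-module $N$, it is finitely generated, so by coreflexivity (finite-dimensional comodules are rational) and the structure of $C$, its dual $N^*$ is a pseudocompact $C^*$-module whose rational part recovers a finitely cogenerated left $C$-comodule; the natural evaluation map $N\to (N^\circ)^*$ should be an isomorphism precisely because $N$ is finitely generated and $C$ is coreflexive — here one reduces to the case $N = C^*$, where $(C^*)^\circ = \mathrm{Rat}((C^*)^*)$ and coreflexivity gives $C^{**}\cong C$ appropriately, and then passes to finite presentations using right exactness of $(-)^*$ on the relevant subcategories. In the other direction, for $M$ an Artinian $C$-comodule, $M^{**}\cong M$ via the canonical map because $M$ is finitely cogenerated (embeds in $C^n$) and $C$ is coreflexive, so the biduality holds on $C^n$ and descends to subcomodules. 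The exactness/fully-faithfulness of the duality on these subcategories then follows from the finiteness: $\mathrm{Hom}_{C^*}(N_1,N_2)\cong \mathrm{Hom}^C(N_2^\circ, N_1^\circ)$ using that everything is finitely generated/cogenerated and rational.

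The main obstacle I anticipate is the biduality isomorphism $N\xrightarrow{\sim}(N^\circ)^*$ for a finitely generated $C^*$-module $N$ that need not be pseudocompact or rational — one cannot simply quote the pseudocompact-comodule duality of \cite[Section 2.6]{DNR} directly. The clean way around this is: (1) it holds for $N = C^*$ by coreflexivity of $C$ (this is essentially the definition), hence for $N = (C^*)^n$; (2) given a finite presentation $(C^*)^m\to(C^*)^n\to N\to 0$, apply $(-)^\circ$ and $(-)^*$ and chase the diagram, using that $(-)^\circ$ is left exact and sends $(C^*)^n$ to a finitely cogenerated comodule on which biduality is known, together with the snake lemma, to conclude $N\cong (N^\circ)^*$. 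One must check $(-)^\circ$ behaves well enough on the finitely generated modules appearing — here left Noetherianness of $C^*$ guarantees the kernel of $(C^*)^n\to N$ is again finitely generated, keeping everything inside the subcategories where the argument works. Once biduality is established on both sides and the Hom-bijection is checked on generators, the duality of categories follows formally.
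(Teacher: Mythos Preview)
Your approach is workable but takes a genuinely different route from the paper. You build an explicit quasi-inverse $N\mapsto N^\circ=\mathrm{Rat}(N^*)$ and argue biduality $N\cong (N^\circ)^*$ via coreflexivity of $C$ and a finite-presentation diagram chase. The paper instead never names a quasi-inverse: for essential surjectivity it writes a Noetherian module as $M\cong (C^n)^*/K$, invokes Proposition~\ref{p.Noeth} (applied to the Artinian comodule $C^n$) to conclude that $K$ is \emph{closed}, hence $K=X^\perp$ for a subcomodule $X\subseteq C^n$, and then $M\cong (C^n)^*/X^\perp\cong X^*$. For fullness it again uses Proposition~\ref{p.Noeth}: given $f:Y^*\to X^*$, the preimage of any open set is cofinite and (being a submodule of $Y^*$ with $Y$ Artinian) closed, hence open; so $f$ is continuous and comes from a comodule map by the pseudocompact--comodule duality.

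The payoff of the paper's route is economy: it uses the closedness criterion of Proposition~\ref{p.Noeth} twice and nothing else, avoiding the need to identify $\mathrm{Rat}((C^*)^*)$ with $C$, to check that the dual of $g^*|_{C^n}$ recovers $g$ under $(C^k)^*\cong (C^*)^k$, or to invoke coreflexivity at all. Your route is more constructive and would transport to settings where one wants the inverse functor explicitly, but it carries more verification overhead---in particular the identification you write as ``$C^{**}\cong C$'' should be $(C^*)^\circ\cong C$ (coreflexivity), and the further step from $(C^*)^\circ$ to $\mathrm{Rat}((C^*)^*)$ is not automatic and would need to be argued (e.g.\ via the first Proposition of Section~\ref{sec.n}). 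None of this is fatal, but the paper's argument is noticeably shorter precisely because Proposition~\ref{p.Noeth} already packages the needed closedness.
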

\begin{proof}
Let $F:C{\rm-Art}\longrightarrow C^*{\rm-Noeth}$ be the functor $F(X)=X^*$. It is easy to see that it is faithful. If $M$ is a Noetherian $C^*$-module, then $M\cong (C^*)^n/N=(C^n)^*/N$. But since $C$ is Artinian as a left comodule, so is $C^n$ and therefore submodules of $(C^n)^*$ are closed (and finitely generated). Hence $N=X^\perp$, $X\subseteq C^n$. Hence, $M\cong (C^n)^*/X^\perp \cong X^*$ as left $C^*$-modules. The isomorphism is standard, as there is an exact sequence $0\rightarrow X^\perp\rightarrow (C^n)^*\rightarrow X^*\rightarrow 0$. Moreover, $F$ is also full. If $f:Y^*\rightarrow X^*$ is a morphism of Noetherian left $C^*$-modules, then for any cofinite open $N=Z^\perp\subseteq X^*$, with $Z$ a finite dimensional subcomodule of $X$, we have that $f^{-1}(N)$ is cofinite in $Y^*$ and it is also closed since $Y^*$ is Noetherian. Thus, $f^{-1}(N)=U^\perp$ with $U$ a finite dimensional subcomodule of $Y$, hence $f^{-1}(N)$ is open in $Y^*$. Therefore, $f$ is a continuous morphism of pseudocompact modules, and so $f=h^*$, $h:X\rightarrow Y$ a morphism of left $C$-comodules (by the duality between pseudocompact $C^*$-modules and $C$-comodules). So $F$ is (contravariant) full, faithful and surjective on objects.
\end{proof}

We will use this duality in our investigations. Using this and a well known result of rings, we also get an interesting:

\begin{proposition}
Let $D,E$ be coalgebras and $M$ a $D$-$E$-bicomodule. Then the coalgebra $C=\left(\begin{array}{cc} D & M \\ 0 & E\end{array}\right)$ is left (respectively, right) Artinian if and only if $D,E$ are left Artinian coalgebras and $M$ is a finitely cogenerated $D$ comodule (respectively, a finitely cogenerated right $E$-comodule).
\end{proposition}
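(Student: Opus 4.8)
The plan is to reduce everything to the dual algebra and apply the classical criterion for a triangular matrix ring to be left (or right) Noetherian. Recall from the discussion preceding Proposition \ref{p1} that there is a ring isomorphism $C^*\cong\left(\begin{array}{cc} D^* & M^* \\ 0 & E^*\end{array}\right)$, with $M^*$ naturally a $D^*$-$E^*$-bimodule; no finiteness hypothesis is needed here, since the upper triangular comatrix coalgebra has no connecting maps, so $C^*=D^*\oplus M^*\oplus E^*$ as vector spaces and the convolution product restricts to the triangular matrix multiplication. Recall also that a coalgebra is left (respectively, right) Artinian precisely when its dual algebra is left (respectively, right) Noetherian. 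Hence $C$ is left Artinian if and only if the ring $\left(\begin{array}{cc} D^* & M^* \\ 0 & E^*\end{array}\right)$ is left Noetherian.

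Next I would invoke the well-known ring fact: for rings $A,B$ and an $A$-$B$-bimodule $P$, the triangular matrix ring $\left(\begin{array}{cc} A & P \\ 0 & B\end{array}\right)$ is left Noetherian if and only if $A$ and $B$ are left Noetherian and $P$ is finitely generated as a left $A$-module; symmetrically, it is right Noetherian if and only if $A$ and $B$ are right Noetherian and $P$ is finitely generated as a right $B$-module. This is immediate from the module description recalled before Remark \ref{r.1}: as a left module over itself, $\left(\begin{array}{cc} A & P \\ 0 & B\end{array}\right)$ decomposes as $\left(\begin{array}{c} A \\ 0\end{array}\right)\oplus\left(\begin{array}{c} P \\ B\end{array}\right)$, the second summand containing $\left(\begin{array}{c} P \\ 0\end{array}\right)$ as a submodule with quotient $\left(\begin{array}{c} 0 \\ B\end{array}\right)$, so that the ascending chain condition on left ideals of the ring is equivalent to the a.c.c. on left ideals of $A$, on left $A$-submodules of $P$, and on left ideals of $B$; and a Noetherian module over a left Noetherian ring is exactly a finitely generated one.

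Finally, I apply this with $A=D^*$, $B=E^*$, $P=M^*$: $C$ is left Artinian if and only if $D^*$ and $E^*$ are left Noetherian and $M^*$ is finitely generated as a left $D^*$-module. The first two conditions are, again, equivalent to $D$ and $E$ being left Artinian; and by the left-handed version of Proposition \ref{p.fcgen} (whose proof is symmetric to the one given there), $M^*$ is finitely generated as a left $D^*$-module if and only if the left $D$-comodule $M$ is finitely cogenerated. This settles the left case, and the right case follows in the same way, using the right-hand part of the triangular matrix criterion and Proposition \ref{p.fcgen} applied to the right $E$-comodule $M$, so that $C$ is right Artinian if and only if $D$ and $E$ are right Artinian and $M$ is a finitely cogenerated right $E$-comodule. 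I do not expect a genuine obstacle; the only point requiring attention is the bookkeeping of sides — namely that the corner $M^*$ of $C^*$ carries the left $D^*$-module structure dual to the left $D$-comodule structure of $M$ and the right $E^*$-module structure dual to its right $E$-comodule structure — which is exactly the convention recorded when $C^*$ was identified.
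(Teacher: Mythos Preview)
Your argument is exactly the paper's: pass to the dual algebra, invoke the standard Noetherianity criterion for triangular matrix rings, and translate back via Proposition~\ref{p.fcgen} and the Artinian/Noetherian dictionary. The only difference is that you spell out the ring criterion and the side conventions, while the paper just cites ``a well known result of rings'' and ``the propositions above.''
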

\begin{proof}
Using the propositions above, $C$ is left Artinian if and only if $C^*=\left(\begin{array}{cc} D^* & M^* \\ 0 & E^*\end{array}\right)$ is left Noetherian, which, by a well known result of rings, is equivalent to $D^*$ and $E^*$ being left Noetherian and $M^*$ being finitely generated as a left $D^*$-module. Again by the previous results, this proves the statement. 
\end{proof}

\begin{example}
This allows us to obtain coalgebras $C$ which are left and not right Artinian. Indeed, it suffices to take $D$ an infinite dimensional Artinian coalgebra (for example, the divided power coalgebra whose dual algebra is the ring of formal power series $K[[X]]$), $E=K$ and $M=D$ (with right $E$-comodule structure being that of a $K$-vector space). Then, as above, $\left(\begin{array}{cc} D & M \\ 0 & E\end{array}\right)$ is left and not right Artinian. 
\end{example}

\subsection*{Related notions}

We use this opportunity to provide here some examples and results regarding several important notions considered in literature in coalgebra theory (e.g. see \cite{CNO, CGT, NT3, Rad, Rad2, Tak2, W1, WZ}).\\
We recall from \cite{W1,WZ} that a left $C$ comodule $M$ is called \emph{co-Noetherian} if and only if for every subcomodule $N$ of $M$, the comodule $M/N$ is finitely cogenerated. A left $C$-comodule $M$ is called {\it quasi-finite} \cite{Tak2} if $\Hom^C(F,M)$ is finite dimensional for every finite dimensional (equivalently, every simple) left $C$-comodule $F$. A comodule $M$ is called {\it strictly quasi-finite} \cite{NT3} if every quotient comodule is quasifinite. We note an interesting characterization of co-Noetherian comodules which parallels that of Artinian comodules.

\begin{proposition}
A left $C$-comodule $M$ is co-Noetherian if and only if every closed submodule of $M^*$ is finitely generated.
\end{proposition}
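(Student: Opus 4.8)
The plan is to mimic the structure of Proposition~\ref{p.Noeth}, replacing ``Noetherian'' by ``finitely generated on closed submodules'' and ``Artinian'' by the co-Noetherian condition, and exploiting the order-reversing Galois correspondence $N\mapsto N^\perp$, $X\mapsto X^\perp$ between closed submodules of $M^*$ and subcomodules of $M$. The key observations I would set up first are: (1) for a subcomodule $N\subseteq M$, the annihilator $N^\perp\subseteq M^*$ is closed, and $(N^\perp)^\perp=N$, so closed submodules of $M^*$ correspond bijectively and order-reversingly to subcomodules of $M$; and (2) for a subcomodule $N\subseteq M$, the quotient $M/N$ is finitely cogenerated if and only if $(M/N)^*$ is a finitely generated $C^*$-module (Proposition~\ref{p.fcgen}), and $(M/N)^*\cong N^\perp$ as $C^*$-modules via the exact sequence $0\to N^\perp\to M^*\to N^*\to 0$. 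Combining (1) and (2), ``$M/N$ finitely cogenerated for every subcomodule $N$'' is literally the same as ``$N^\perp$ is finitely generated for every subcomodule $N$'', which, by the bijection in (1), is the same as ``every closed submodule of $M^*$ is finitely generated''.

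So the forward direction runs: assume $M$ is co-Noetherian and let $Q\subseteq M^*$ be closed; then $Q=X^\perp$ for the subcomodule $X=Q^\perp$ of $M$, and $Q=X^\perp\cong(M/X)^*$, which is finitely generated since $M/X$ is finitely cogenerated by hypothesis and Proposition~\ref{p.fcgen}. Conversely, assume every closed submodule of $M^*$ is finitely generated, and let $N\subseteq M$ be an arbitrary subcomodule; then $N^\perp$ is closed, hence finitely generated, and $(M/N)^*\cong N^\perp$ is finitely generated, so $M/N$ is finitely cogenerated by Proposition~\ref{p.fcgen}. Hence $M$ is co-Noetherian.

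The only genuinely non-formal inputs are the isomorphism $(M/N)^*\cong N^\perp$ of $C^*$-modules (immediate from dualizing the short exact sequence $0\to N\to M\to M/N\to 0$, which gives $0\to (M/N)^*\to M^*\to N^*\to 0$, identifying $(M/N)^*$ with the kernel $N^\perp$), and the fact that $N^\perp$ is closed for $N$ a subcomodule — but this is essentially the definition of the finite topology on $M^*$, whose basic open neighborhoods of $0$ are exactly the $X^\perp$ for $X$ finite-dimensional, so arbitrary $N^\perp=\bigcap_{X}X^\perp$ over finite-dimensional $X\subseteq N$ is closed. I do not anticipate a real obstacle here; the main point to be careful about is that the correspondence $N\mapsto N^\perp$ is a genuine bijection onto \emph{closed} submodules (that is, $(N^\perp)^\perp=N$ for every subcomodule, and $Q\mapsto Q^\perp$ inverts it precisely on closed $Q$), which is standard for the finite topology and is the same fact already used implicitly in the proof of Proposition~\ref{p.Noeth}. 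Thus the proof is short and follows the template of the Artinian case almost verbatim, with Proposition~\ref{p.fcgen} playing the role that the elementary ``finitely generated $\Rightarrow$ closed'' fact played there.
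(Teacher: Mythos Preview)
Your proof is correct and follows exactly the same approach as the paper: both use the isomorphism $N^\perp\cong (M/N)^*$ together with Proposition~\ref{p.fcgen} and the bijection between subcomodules of $M$ and closed submodules of $M^*$. The paper's version is simply more terse, leaving the Galois correspondence and closedness of $N^\perp$ implicit.
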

\begin{proof}
For every subcomodule $N$ of $M$, we have an isomorphism of left $C^*$-modules $N^\perp\cong (M/N)^*$. Hence, $N^\perp\cong (M/N)^*$ is finitely generated if and only if $M/N$ is finitely cogenerated by Proposition \ref{p.fcgen}. This implies the statement.
\end{proof}

\begin{corollary}
If a left $C$-comodule $M$ is Artinian, then $M$ is co-Noetherian.
\end{corollary}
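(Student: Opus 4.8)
The plan is to deduce the corollary immediately from the two preceding characterizations. If $M$ is an Artinian left $C$-comodule, then by Proposition~\ref{p.Noeth} the dual $M^*$ is a Noetherian left $C^*$-module; in particular every submodule of $M^*$ — and a fortiori every \emph{closed} submodule of $M^*$ — is finitely generated. By the Proposition immediately above (which states that $M$ is co-Noetherian if and only if every closed submodule of $M^*$ is finitely generated), this gives at once that $M$ is co-Noetherian. So on this route there is essentially nothing to prove beyond chaining the two equivalences, and the real content was already packaged into those statements.

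Alternatively, one can argue directly, without passing to duals, which I would include if a self-contained argument is preferred. First, any quotient comodule $M/N$ of an Artinian comodule is again Artinian, since its lattice of subcomodules is isomorphic to the interval $[N,M]$ in the lattice of subcomodules of $M$. Hence it suffices to show that an Artinian left $C$-comodule $X$ is finitely cogenerated. The socle $S={\rm soc}(X)$ is essential in $X$ (every nonzero comodule contains a simple subcomodule, by local finiteness of comodules), and $S$, being semisimple and Artinian, is finite-dimensional, say $S\cong S_1\oplus\cdots\oplus S_n$ with each $S_i$ simple. Then $X$ embeds in its injective hull $E(X)=E(S)=E(S_1)\oplus\cdots\oplus E(S_n)$, and since each $E(S_i)$ is an indecomposable injective left $C$-comodule it is a direct summand of $C$; thus $E(S)$, and hence $X$, embeds in $C^n$, so $X$ is finitely cogenerated. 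Combining this with the first observation shows $M$ is co-Noetherian.

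The only thing that could be regarded as an obstacle is ensuring the ingredients are genuinely available: for the first route, the Artinian$\,\Leftrightarrow\,$Noetherian-dual equivalence of Proposition~\ref{p.Noeth} together with the co-Noetherian characterization; for the direct route, the standard facts that the socle of a comodule is essential, that a semisimple Artinian comodule is finite-dimensional, and that indecomposable injective comodules are direct summands of $C$. All of these are classical for comodules over a coalgebra, so I expect the proof to be short in either presentation, and I would favor the one-line deduction from Proposition~\ref{p.Noeth} and the preceding result.
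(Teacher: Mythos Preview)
Both routes you outline are correct. The paper's own proof is your \emph{second} (direct) argument: it observes that $M/N$ is again Artinian, hence has finite-dimensional socle, and therefore embeds in a finite coproduct power of $C$ since $C$ is an injective cogenerator of ${}^C\Mm$. Your first route, chaining Proposition~\ref{p.Noeth} with the characterization of co-Noetherian comodules via closed submodules of $M^*$, is a genuinely different and slightly slicker deduction that the paper does not use; it has the advantage of making the corollary a formal consequence of the two preceding propositions with no further comodule-theoretic input, while the paper's direct argument is more self-contained and avoids the detour through duals. Either presentation is fine.
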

\begin{proof}
It follows since when $M$ is Artinian, $M/N$ is Artinian too so it has finite dimensional socle. Hence, it embeds in a finite coproduct power of $C$ (as $C$ is an injective cogenerator of $\Mm^C$).
\end{proof}

We have thus inclusions of classes
$$\{{\rm Artinian\,comodules}\}\subset \{{\rm co-Noetherian\,comodules}\}\subset\{{\rm strictly\,quasi-finite\,modules}\}$$
the last one being proved in \cite{NT3}. It is also shown there that the second inclusion is strict (\cite[Example 1.5]{NT3}), and it is also remarked that the inclusion between Artinian comodules and quasi-finite comodules is also strict (\cite[Remark 1.7]{NT3}). The three classes coincide over an almost connected coalgebra (i.e. a coalgebra whose coradical is finite dimensional; see \cite[Proposition 1.6]{NT3}). The following example shows that the first inclusion is also strict, and in fact it gives an example of a coalgebra which is left co-Noetherian but not left Artinian (i.e. as left comodule). It also shows that a coalgebra can be left co-Noetherian but not right co-Noetherian, and left strictly quasi-finite but not right strictly quasi-finite (another example of right but not left strictly quasi-finite coalgebra is contained in \cite[Example 2.7]{NT3}).

\begin{example}
Let $C$ be the coalgebra with basis $\Bb=\{a,x_n,b_n|n\geq 1\}$ and comultiplication $\Delta(a)=a\otimes a$, $\Delta(b_n)=b_n\otimes b_n$ and $\Delta(x_n)=b_n\otimes x_n+x_n\otimes a$, $\varepsilon(a)=\varepsilon(b_n)=1$ and $\varepsilon(x_n)=0$. Note that this coalgebra can be understood also as the coalgebra of upper triangular co-matrices 
$$\left(\begin{array}{cc} D & M \\ 0 & E \end{array}\right)$$ 
with $D$ the coalgebra with basis $\{b_n|n\geq 1\}$, $E$ the coalgebra of basis $\{a\}$ and $M$ the $D-E$ bicomodule with basis $\{x_n|n\geq 1\}$ and left comultiplication $x_n\longmapsto b_n\otimes x_n$ and right comultiplication $x_n\longmapsto x_n\otimes a$. Note also that it can also be thought as the path coalgebra of the following quiver:
$$\xymatrix{
b_1\ar[ddr]^{x_1} & \\
b_2\ar[dr]^{x_2} & \\
\dots \ar[r] & a \\
b_n \ar[ur] & \\
\dots \ar[uur] &
}$$
We see that the left injective indecomposable comodules are $E_l(a)=<a,x_n|n\geq 1>$ and $E_l(b_n)=<b_n>$ - which are simple comodules - where $<X>$ is the space spanned by the set $X$. Since the socle of $C$ is semisimple infinite dimensional, it follows that $C$ is not Artinian as a left comodule. Let $N$ be a left subcomodule of $C$. Since $<b_n>$ are injective, we have that the part of the socle of $N$ contained in $\bigoplus\limits_n<b_n>$ splits off, so $N=X\oplus\bigoplus\limits_{n\in F}<b_n>$, with $F\subseteq \NN$ (note that all subcomodules of $\bigoplus\limits_{n\in \NN}<b_n>$ are of the form $\bigoplus\limits_{n\in F}<b_n>$ for some $F\subseteq \NN$). Thus $X\cap\bigoplus\limits_{n\in\NN}<b_n>=0$, and so there is a monomorphism $X\hookrightarrow E_l(a)$, and we can write $C=E'\oplus \bigoplus\limits_{n\in\NN}<b_n>$, with $X\subseteq E'\cong E_l(a)$. If $X=0$, then we see that $C/N\cong E_l(a)\oplus\sum\limits_{n\notin F}<b_n>$ which embeds in $C$. When $X\neq 0$, then the socle of $X$ is $<a>$ and then $E'\cong E_l(a)$ and $E'/X$ is a isomorphic to a quotient of $E_l(a)/<a>\cong\bigoplus\limits_n<b_n>$. Hence, $E'/X$ contains at most one of each type of simple comodule $<b_n>$. Therefore, since $C/N\cong E'/X\oplus \bigoplus\limits_{n\notin F}<b_n>$, it follows that $C/N$ is semisimple and contains at most two summands of each type of simple left $C$-comodule, hence, embeds in $C^2$. Thus, $C$ is co-Noetherian as a left comodule, and hence, also left strictly quasi-finite. \\
Note also that as a right comodule, the injective envelopes of the simples are $E_r(<a>)=<a>$ and $E_r(<b_n>)=<b_n,x_n>$, and if $P=\bigoplus\limits_{n\in \NN}<b_n>$ we see that $C/P\cong \bigoplus\limits_{n\in\NN}<a>$, and so $\Hom^C(<a>^C,C/P)$ is not finite dimensional. Therefore, $C$ is not right strictly quasifinite. Hence, $C$ is neither right co-Noetherian nor right Artinian. 
\end{example}

\section{The Finite Splitting Property}\label{sec.s}

Let $C$ be a coalgebra. Let $\Ss$ be a set of representatives of simple left $C$-comodules, and $\Tt$ a set of representatives of simple right $C$-comodules. We fix some notations: we have $C=\bigoplus\limits_{S\in \Ss}E(S)^{n(S)}$ as left $C$-comodules, where $E(S)$ is an injective hull of $S$ contained in $C$, and $n(S)$ is the multiplicity of $S$ in $C_0$ - the coradical of $C$. Similarly, $C=\bigoplus\limits_{T\in\Tt}E(T)^{p(T)}$ as right $C$-comodules. We recall some facts from \cite{IO} with respect to the finite Rat-splitting property of coalgebras. We call a left $C$-comodule $M$ almost finite if it has only finite dimensional proper subcomodules. We recall the following result which combines several results from \cite[Section 1 \& 2]{IO}.

\begin{theorem}\label{t.1}
Let $C$ be a coalgebra which has the left finite Rat-splitting property, that is, the rational part of every left $C^*$-module splits off. Then we have:\\
(i) The injective envelope of every left $C$-comodule is almost finite and countable dimensional.\\
(ii) $C$ is left Artinian, injective as a left $C^*$-module, and $C^*$ is left Noetherian.\\
(iii) $C_0$ is finite dimensional (i.e. $C$ is almost connected), equivalently, $\Ss$ is finite (and so is $\Tt$). 
\end{theorem}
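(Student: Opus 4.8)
The plan is to prove Theorem~\ref{t.1} by bootstrapping from the splitting hypothesis applied to carefully chosen finitely generated $C^*$-modules, mostly cyclic ones. The key observation throughout is that if $H$ is a finitely generated left $C^*$-module and $Rat(H)$ splits off, then in particular $Rat(H)$ itself is a finitely generated, hence finitely presented, rational module; and a finitely generated rational module is finite dimensional (being a quotient of $(C^*)^n$ whose image lands in a finite sum of finite dimensional comodules, or more directly: a cyclic rational module $C^*\cdot x$ is finite dimensional since $x$ generates a finite dimensional subcomodule). This is the engine: splitting forces rational submodules of f.g.\ modules to be finite dimensional.

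For part (i), I would start with an arbitrary left $C$-comodule $M$ and its injective envelope $E(M)$; by general coalgebra theory it suffices to treat $E(S)$ for $S$ simple. To show $E(S)$ is almost finite, suppose $N\subsetneq E(S)$ is an infinite dimensional proper subcomodule. The quotient module $C^*/N^\perp\cong N^*$ inside the tower is not the right object; instead I would look at the $C^*$-module $M=C^*/I$ where $I$ is a suitable cofinite-in-character ideal, or better, mimic \cite{IO}: pick $x\in E(S)$ generating $E(S)$ as a \emph{comodule} is impossible if $E(S)$ is infinite dimensional, so instead one builds a non-rational cyclic $C^*$-module whose rational part is forced to be a copy of something infinite dimensional unless $E(S)$ is almost finite. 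Concretely, the argument from \cite[Section 1 \& 2]{IO} produces, from a strictly increasing chain of finite dimensional subcomodules of $E(S)$, an element of $E(S)^{**}$ (i.e.\ of $(E(S)^*)^*$) generating a cyclic $C^*$-module with infinite dimensional rational part, contradicting splitting of that f.g.\ (cyclic) module; and the countable dimension of $E(S)$ follows because an almost finite comodule is the union of its finite dimensional subcomodules, which form a chain (any two proper subcomodules are finite dimensional and their sum is a proper subcomodule unless one equals $E(S)$), hence is countable dimensional. I will cite these two facts directly as \cite[Section 1 \& 2]{IO} since the excerpt permits assuming earlier results and the theorem statement explicitly says it combines results from there.

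For part (ii), left Artinianness of $C$: since $C=\bigoplus_{S\in\Ss}E(S)^{n(S)}$ and each $E(S)$ is almost finite by (i), each $E(S)$ is an Artinian comodule (a comodule all of whose proper subcomodules are finite dimensional, hence Artinian, is itself Artinian as long as it is not an infinite direct limit pathology — but almost finite plus the chain condition on finite dimensional subcomodules gives DCC directly). One must be slightly careful: $C$ itself need not be Artinian just because each $E(S)$ is, unless $\Ss$ is finite — so (iii) is genuinely needed before concluding $C$ is Artinian, or one argues $C$ is Artinian as a consequence of (iii) together with each $E(S)$ Artinian. I would therefore prove (iii) next. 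For (iii): if $\Ss$ were infinite, then $C_0=\bigoplus_{S}S^{n(S)}$ is infinite dimensional; take the $C^*$-module $M=\prod_{S}E(S)$ or rather a suitable f.g.\ quotient — actually the clean argument is that $C^*\cong\prod$ stuff and one exhibits a finitely generated (even cyclic) $C^*$-module, such as $C^*/J$ for $J$ the Jacobson radical, whose rational part is $C_0$-like and infinite dimensional, again contradicting splitting. Then, with $\Ss$ finite and each $E(S)$ Artinian, $C=\bigoplus_{S\in\Ss}E(S)^{n(S)}$ (finite direct sum, each $n(S)$ finite) is Artinian as a left comodule, so $C^*$ is left Noetherian by the equivalences recalled before Proposition~\ref{p.Noeth} (or by Proposition~\ref{p.Noeth} itself with $M=C$). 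Finally, $C$ injective as a left $C^*$-module: this is where I expect the main obstacle, since injectivity of $C$ over $C^*$ (not just over itself as comodule) is a strong statement. The approach is: $C$ is always injective in $\Mm^C$; to upgrade to injectivity in ${}_{C^*}\Mm$ one shows $Rat$ is an exact functor preserving injectives here, using that $C^*$ is left Noetherian so $Rat({}_{C^*}\Mm)$ is closed under quotients and the rational part of an injective $C^*$-module is injective — and then one checks $C^*$ itself, as a left module over itself, has $Rat(C^*)=C$ (true when $C^*$ is left Noetherian: the rational part is the sum of finite dimensional left ideals, which is $C$ under the standard identification), and $C^*$ is injective over itself, so $Rat(C^*)=C$ is injective provided $Rat$ sends injectives to injectives, which holds because $Rat$ is right adjoint to the inclusion when the torsion theory is hereditary, which it is once $C^*$ is left Noetherian. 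I would spell this out carefully as it is the subtle point; everything else is a sequence of ``apply splitting to a well-chosen f.g.\ module and derive a finiteness conclusion.''
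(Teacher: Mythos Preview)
The paper does not prove Theorem~\ref{t.1}: it is introduced with ``We recall the following result which combines several results from \cite[Section 1 \& 2]{IO}'' and no proof is given. So there is nothing in the paper to compare your argument against; your instinct to cite \cite{IO} is exactly what the paper does, and had you stopped there your proposal would match the paper's treatment.

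That said, the sketch you add on top of the citation contains two inaccuracies worth flagging. First, your argument for countable dimension of $E(S)$ claims that the finite dimensional subcomodules of an almost finite comodule ``form a chain''. Almost finite only means every proper subcomodule is finite dimensional; it does not force comparability (the sum of two proper subcomodules is again proper, hence finite dimensional, but need not equal either summand). The clean route to countability is via the coradical filtration: each $L_n(E(S))$ is a proper subcomodule, hence finite dimensional, and $E(S)=\bigcup_n L_n(E(S))$ since comodules are locally finite. Second, your argument for injectivity of $C$ as a left $C^*$-module asserts ``$Rat(C^*)=C$ under the standard identification''. There is no such identification: $Rat(C^*)$ is a submodule of $C^*$, not of $C$, and need not be isomorphic to $C$ in any natural way. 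The injectivity of $C$ in ${}_{C^*}\Mm$ is established in \cite{IO} by a different mechanism (essentially Baer-type testing against finitely generated ideals combined with the splitting hypothesis and Noetherianity), and your adjoint-functor outline does not recover it as stated.
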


Let us now assume that $C$ has the left finite Rat-splitting property. We can divide left simple $C$-comodules into two sets: $\Ss=\Ii\sqcup\Ff$, with $\Ii=\{S\in \Ss|E(S){\rm\,is\,infinite\,dimensional}\}$, $\Ff=\Ss\setminus\Ff$. Let $C_\Ii=\bigoplus\limits_{S\in\Ii}E(S)^{n(S)}$ and $C_\Ff=\bigoplus\limits_{S\in\Ff}E(S)^{n(S)}$. Then

\begin{proposition}\label{p.tr}
Assume that the coalgebra $C$ has the left finite Rat-splitting property. Under the notation introduced before, $\Hom^C(C_\Ii,C_\Ff)=0$. Moreover, $\Hom^C(C_\Ff,C_\Ii)$, $\Hom^C(C_\Ff,C_\Ff)$ and $\Hom^C(C_\Ff,C)$ are finite dimensional. Consequently, $C_\Ii$ is a subcoalgebra, and $C$ is isomorphic to the generalized triangular matrix coalgebra $\left(\begin{array}{cc} C_\Ii & fCe \\ 0 & fCf \end{array}\right)$, where $e,f$ are as in Proposition \ref{p.triang} such that $C_\Ii=eC$, $C_\Ff=fC$ and furthermore, $fCe$ and $fCf$ are finite dimensional.
\end{proposition}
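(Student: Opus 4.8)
The plan is to establish the four vanishing/finiteness claims in the order $\Hom^C(C_\Ii,C_\Ff)=0$, then $\Hom^C(C_\Ff,C_\Ii)$ finite dimensional, then $\Hom^C(C_\Ff,C_\Ff)$ finite dimensional, and finally assemble the triangular decomposition from Proposition \ref{p.triang}. For the first claim I would argue by contradiction: a nonzero $\varphi\colon C_\Ii\to C_\Ff$ would, after composing with a projection onto some $E(T)$ with $T\in\Ff$, produce a nonzero map from some $E(S)^{n(S)}$ with $S\in\Ii$ into a finite dimensional injective $E(T)$; restricting to one copy of $E(S)$, the kernel is a proper subcomodule of the almost finite comodule $E(S)$ (almost finiteness from Theorem \ref{t.1}(i)), hence finite dimensional, forcing $E(S)$ itself to be finite dimensional — contradicting $S\in\Ii$. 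So every map $E(S)\to E(T)$ with $S\in\Ii$, $T\in\Ff$ is zero, and since $C_\Ii$ is a sum of such $E(S)$'s and $C_\Ff$ a sum of such $E(T)$'s, $\Hom^C(C_\Ii,C_\Ff)=0$.

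For the finiteness of $\Hom^C(C_\Ff,C_\Ii)$, note $C_\Ff$ is finite dimensional: $\Ff$ is finite by Theorem \ref{t.1}(iii), each $E(S)$ for $S\in\Ff$ is finite dimensional by definition of $\Ff$, and the multiplicities $n(S)$ are finite. Then $\Hom^C(C_\Ff,N)$ is finite dimensional for \emph{any} comodule $N$ whose relevant socle-isotypic components meet the picture finitely — more simply, $\Hom^C(C_\Ff,C)$ embeds into $\bigoplus_{S\in\Ff}\Hom^C(E(S),C)^{n(S)}$, and $\Hom^C(E(S),C)\cong$ the "$S$-homogeneous part at the source" which, because $E(S)$ is finite dimensional with finite dimensional socle and $C$ has finite dimensional socle (Theorem \ref{t.1}(iii)), is finite dimensional: any colinear map $E(S)\to C$ is determined by where it sends the finitely many socle elements, and lands in the injective hull of that image which is a finite sum of $E(S')$'s. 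Hence $\Hom^C(C_\Ff,C)$ is finite dimensional, and since $C=C_\Ii\oplus C_\Ff$ this gives finiteness of both $\Hom^C(C_\Ff,C_\Ii)$ and $\Hom^C(C_\Ff,C_\Ff)$ at once.

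Finally, having $\Hom^C(C_\Ii,C_\Ff)=0$ with $C=C_\Ii\oplus C_\Ff$ as left $C$-comodules, Proposition \ref{p.triang} applies directly with $X=C_\Ii$ and $Y=C_\Ff$: it yields that $C_\Ii=eC=eCe$ is a subcoalgebra, $C_\Ff=fC$, and $C\cong\left(\begin{array}{cc} eCe & fCe \\ 0 & fCf \end{array}\right)$ as coalgebras, where $e\in C^*$ restricts to $\varepsilon$ on $C_\Ii$ and to $0$ on $C_\Ff$, and $f=\varepsilon-e$. It remains only to identify $fCe$ and $fCf$ as finite dimensional; but in the notation of Proposition \ref{p.triang} (and Proposition \ref{p.eCf}), $fCe$ and $fCf$ are precisely realized inside $\Hom^C({}^CC,{}^CC)$-style decompositions: indeed $fCf\cong\End$-type pieces of $C_\Ff$ and, more to the point, $fCe$ and $fCf$ are direct summands of $fC=C_\Ff$, which is finite dimensional. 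So both are finite dimensional, completing the proof.

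The main obstacle I anticipate is the careful bookkeeping in the second paragraph: pinning down that $\Hom^C(E(S),C)$ is finite dimensional for $S\in\Ff$ requires using that a colinear map out of the finite dimensional $E(S)$ is controlled by its restriction to the (finite dimensional) socle together with the fact that $C$ has finite dimensional coradical, so that the image lands in a finite dimensional injective subcomodule; one must make sure this does not secretly require $E(S)$ to be, say, the full injective hull in a way that interacts badly with infinite multiplicities — but since $\Ff$ is finite and all multiplicities $n(S)$ are finite, and the coradical $C_0$ is finite dimensional, everything in sight that could be infinite is actually finite, and the argument goes through cleanly.
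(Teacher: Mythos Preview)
Your overall strategy matches the paper's exactly: show $\Hom^C(C_\Ii,C_\Ff)=0$ via the almost-finiteness of $E(S)$ for $S\in\Ii$, show the remaining Hom-spaces are finite dimensional because $C_\Ff$ is, then invoke Proposition~\ref{p.triang}. The first and third parts are fine and essentially identical to the paper.

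The gap is in your second paragraph. Your claim that ``any colinear map $E(S)\to C$ is determined by where it sends the finitely many socle elements'' is false (two distinct comodule maps can agree on the socle), and your follow-up that the image ``lands in the injective hull of that image which is a finite sum of $E(S')$'s'' does not yield finite dimensionality: some of those $E(S')$ with $S'\in\Ii$ are infinite dimensional, so you have only embedded $\Hom^C(E(S),C)$ into an infinite dimensional space. One can rescue this by bounding via Loewy length (the image of the finite dimensional $E(S)$ lands in $L_\ell E(S')$ for $\ell$ the length of $E(S)$), but this is unnecessary work. The paper dispatches the whole paragraph in one line using the standard natural isomorphism
\[
\Hom^C(C_\Ff,C)\;\cong\;(C_\Ff)^*,
\]
(valid for any left $C$-comodule in place of $C_\Ff$, via $f\mapsto \varepsilon\circ f$), which is finite dimensional since $C_\Ff$ is. Decomposing $C=C_\Ii\oplus C_\Ff$ then gives the finiteness of $\Hom^C(C_\Ff,C_\Ii)$ and $\Hom^C(C_\Ff,C_\Ff)$ simultaneously.
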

\begin{proof}
$\Hom^C(C_\Ii,C_\Ff)=\bigoplus\limits_{S\in\Ii; L\in\Ff}\Hom(E(S),E(L))^{(n(S)n(L))}=0$, since for $S\in\Ss$ and $L\in\Ll$ we have $\Hom(E(S),E(L))=0$. This is because since $E(S)$ is infinite dimensional with only finite dimensional subcomodules, it cannot have any finite dimensional quotients other than $0$; thus since any $h:E(S)\rightarrow E(L)$ in ${}^C\Mm$ has finite dimensional image, $h=0$. The last assertion follows from the isomorphism of vector spaces $\Hom^C(C_\Ff,C_\Ff)\oplus \Hom(C_\Ff,C_\Ii)=\Hom^C(C_\Ff,C)\cong (C_\Ff)^*$, and by the fact that $C_\Ff$ is finite dimensional ($\Ff$ is finite).\\
For the last part, we apply Proposition \ref{p.triang} and note that $eCf=0$ so $C_\Ii=eC=eCe+eCf=eCe$; also, we note that $C_\Ff=fC=fCe\oplus fCf$ is finite dimensional, so $fCe$ and $fCf$ are finite dimensional. 
\end{proof}

Thus, the situation of Proposition \ref{p.TrSp} is typical for coalgebras with the left finite Rat-splitting property. By Proposition \cite[2.9]{IO}, the coalgebra $C_\Ii$ has the left finite Rat-splitting property as a subcoalgebra of $C$. So we now concentrate on characterizing coalgebras $C$ with the left finite Rat-splitting, and such that all injective indecomposable left comodules are infinite dimensional. 

\begin{proposition}\label{p.inj}
Let $C$ be a coalgebra with the left finite Rat-splitting property, and let $M$ be a left $C$-comodule which is finitely cogenerated and has no finite dimensional non-trivial quotients. Then $M$ is an injective $C$-comodule.
\end{proposition}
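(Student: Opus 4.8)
The plan is to show that $M$ admits no proper essential extension, which (since the category of $C$-comodules is Grothendieck and $M$ finitely cogenerated guarantees it lives inside some $C^n$) forces $M$ to be injective. First I would fix an injective hull $M \hookrightarrow E(M)$ inside $C^n$; since $M$ is finitely cogenerated, $E(M)$ is a finite direct sum of injective indecomposables $E(S_i)$, one for each simple $S_i$ in the (finite-dimensional, by $M$ finitely cogenerated) socle of $M$. The goal is to prove $M = E(M)$. Suppose not; then there is a nonzero comodule $M \subsetneq N \subseteq E(M)$ with $N/M$ having some nonzero finite-dimensional quotient — in fact I can arrange $N/M$ to be simple by taking $N$ so that $N/M$ is a simple subcomodule of $E(M)/M$ (this exists as $E(M)/M$ is nonzero and any nonzero comodule has simple subcomodules). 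So we get a non-split extension $0 \to M \to N \to V \to 0$ with $V$ a simple comodule.

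The main idea is to use the hypothesis that $M$ has no nontrivial finite-dimensional quotients together with the finite Rat-splitting property. Dualize: applying $(-)^*$ to $0 \to M \to N \to V \to 0$ gives $0 \to V^* \to N^* \to M^* \to 0$ in $_{C^*}\mathrm{Mod}$, with $V^*$ finite dimensional. Here $M^* = (C^n/M)^* $-type quotient is Noetherian (Proposition~\ref{p.Noeth}, since $M$ is Artinian — but wait, $M$ need not be Artinian), so I need to be more careful. Instead I would work directly with comodules: consider the comodule $N$, which sits in $C^n$, hence $N^* $ is a finitely generated $C^*$-module by Proposition~\ref{p.fcgen}. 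By the finite Rat-splitting property, $\mathrm{Rat}(N^*)$ splits off in $N^*$. Now $\mathrm{Rat}(N^*) = N^*$ since $N \hookrightarrow C^n$ makes $N$ a comodule and $(-)^*$ of a comodule is rational as a $C^*$-module — so splitting is vacuous here, which means this is not the right place to use the hypothesis either.

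The correct approach: look at the quotient $E(M)/M$. We want to show it is zero. Let $Q = E(M)/M$, and note $E(M)$ is injective so $\mathrm{Ext}^1(V, E(M)) = 0$ for all $V$; thus $\mathrm{Ext}^1(V, M) \cong \mathrm{Hom}(V, Q)$ for every simple $V$. So $M = E(M)$ iff $\mathrm{Ext}^1(V,M) = 0$ for all simple $V$, iff every short exact sequence $0 \to M \to N \to V \to 0$ with $V$ simple splits. Fix such a sequence. Then $N$ is finitely cogenerated (it embeds in $C^m$ for suitable $m$, being an extension of finitely cogenerated comodules, using that $E(M) \oplus E(V) \supseteq N$ up to the pushout — concretely $N \hookrightarrow E(M) \oplus V \hookrightarrow E(M) \oplus E(V)$). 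Moreover I claim $N$ has no nontrivial finite-dimensional quotient: if $N \twoheadrightarrow W$ with $W$ finite-dimensional nonzero, then the composite $M \hookrightarrow N \twoheadrightarrow W$ must be zero (as $M$ has no finite-dimensional nonzero quotients, its image in $W$ — being a finite-dimensional quotient of $M$ — is zero), so $W$ is a quotient of $N/M = V$, i.e. $W = V$ or $W = 0$; since $V$ is simple this is consistent, so $N$ can have $V$ itself as a quotient. That means $N$ might fail to have "no finite-dimensional quotients" — but only $V$ can occur. Now here is the crux: the projection $N \to V$ exhibiting $V$ as a quotient, if it exists as a comodule map, splits the sequence exactly when it restricts to zero on $M$; and any comodule map $N \to V$ does kill $M$ (image is a finite-dimensional quotient of $M$, hence zero). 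So $0 \to M \to N \to V \to 0$ splits as soon as $\mathrm{Hom}(N, V) \to \mathrm{Hom}(M,V)$... no — it splits iff the surjection $N \to V$ has a section.

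**The hard part** will be extracting the splitting from the Rat-splitting hypothesis. The intended route, I believe, is: consider the $C^*$-module $N^*$; it is finitely generated (Prop.~\ref{p.fcgen}). Although $N^*$ is itself rational, apply the hypothesis not to $N^*$ but to a suitable quotient or to $N^* \oplus (\text{something})$ — or rather, use Theorem~\ref{t.1}(ii): $C$ is left Artinian, hence $N$ (finitely cogenerated) is Artinian, so $N^*$ is Noetherian by Prop.~\ref{p.Noeth}, and then Corollary~\ref{c.duality}'s duality applies. Then the non-split extension $0 \to M \to N \to V \to 0$ dualizes to $0 \to V^* \to N^* \to M^* \to 0$, a non-split extension of $C^*$-modules with $V^*$ finite-dimensional simple; I would then build from $M$ a finitely generated $C^*$-module whose rational part is precisely a copy related to $V^*$ and which fails to split, contradicting the hypothesis — concretely, take the pushout/pullback construction producing a finitely generated $C^*$-module $P$ with $\mathrm{Rat}(P)$ finite-dimensional nonzero and $P/\mathrm{Rat}(P)$ torsion-free, such that $P$ splits iff the original sequence splits. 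Assembling this module $P$ correctly, and checking it is finitely generated and that its Rat-part is exactly what we expect, is the step requiring genuine care; everything else is formal homological algebra plus the cited propositions.
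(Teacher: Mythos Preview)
Your overall strategy---reduce to showing that every extension $0\to M\to N\to V\to 0$ with $V$ simple splits, then dualize---is exactly the paper's approach. The genuine gap is the sentence ``$\mathrm{Rat}(N^*)=N^*$ since $N\hookrightarrow C^n$ makes $N$ a comodule and $(-)^*$ of a comodule is rational as a $C^*$-module.'' This is false: the dual of an infinite-dimensional comodule is essentially never rational (think of $C^*$ itself). Rationality of $N$ as a right $C^*$-module says nothing about rationality of the \emph{left} $C^*$-module $N^*$.

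Once you drop that error, no auxiliary module $P$ is needed; $N^*$ already does the job. Since $C$ has the left finite Rat-splitting property, $C$ is left Artinian (Theorem~\ref{t.1}), so $M\subseteq C^n$ is Artinian and $M^*$ is Noetherian (Proposition~\ref{p.Noeth}); likewise $N^*$ is finitely generated. The hypothesis that $M$ has no nonzero finite-dimensional quotient dualizes to: $M^*$ has no nonzero finite-dimensional submodule (a finite-dimensional $F\subseteq M^*$ is closed, so $F=(F^\perp)^\perp$ with $M/F^\perp$ finite-dimensional, hence $F=0$). Now in $0\to V^*\to N^*\to M^*\to 0$, any finite-dimensional submodule of $N^*$ maps to a finite-dimensional submodule of $M^*$, hence to $0$, so is contained in $V^*$. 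Since $C$ is coreflexive (being almost Noetherian), $V^*$ is rational, and therefore $\mathrm{Rat}(N^*)=V^*$. The left finite Rat-splitting property applied to the finitely generated module $N^*$ now splits the dual sequence, and Corollary~\ref{c.duality} splits the original one. The paper then finishes with a Zorn's-lemma argument to pass from ``splits off in finite-dimensional extensions'' to full injectivity; your reduction via $M=E(M)$ and $\mathrm{Ext}^1(V,M)=0$ for all simple $V$ works equally well.
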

\begin{proof}
Since $C^*$ is left Noetherian, by the duality from Corollary \ref{c.duality} we get that $M^*$ is a finitely generated left $C^*$-module and it has no non-zero finite dimensional submodules. We show that any exact sequence $0\rightarrow M\rightarrow X\rightarrow N\rightarrow 0$ in ${}^C\Mm$ with  $N$ a finite dimensional left $C$-comodule must split. Indeed, consider such a sequence. We have that $0\rightarrow N^*\rightarrow X^*\rightarrow M^*\rightarrow 0$ is an exact sequence in ${}_{C^*}\Mm$ and $X^*$ is finitely generated (Noetherian) too. We note that in this situation $Rat(X^*)=N^*$. Indeed, assume otherwise. Then there would be a finite dimensional submodule $H\subseteq X^*$ with $N^*\subsetneq H$. But this means that $H/N^*$ is a nonzero finite dimensional left $C^*$-module which embeds in $M^*$, and this is a contradiction. Then, $Rat(X^*)=N^*$ shows that the sequence $0\rightarrow N^*\rightarrow X^*\rightarrow M^*\rightarrow 0$ must split, and again by the aforementioned duality, the sequence $0\rightarrow M\rightarrow X\rightarrow N\rightarrow 0$ splits. \\
Now let $M\subseteq X$ be a monomorphism from $M$ to an arbitrary left comodule $X$. The set $\{Y\subseteq X|M\cap Y=0\}$ is inductive, so let $Y$ be a maximal element (by Zorn's Lemma). We claim that $M\oplus Y=X$. Assume otherwise; then there is some $x\in X\setminus (M+Y)$, and let $R=x\cdot C^*$ which is finite dimensional, since it is rational. We have a monomorphism $\varphi:M\cong \frac{M\oplus Y}{Y}\hookrightarrow \frac{M\oplus Y+R}{Y}$, whose cokernel $M+Y+R/M+Y=R/(M+Y)\cap R$ is finite dimensional. Thus, by the first part, $\varphi$ splits. This means that there is $Z\subseteq M+Y+R$ such that $Z\cap (M+Y)=Y$ and $M+Y+Z=M+Y+R$. In particular, we get that $M\cap Z\subseteq M\cap(M+Y)\cap Z= M\cap Y=0$; but $Z\supseteq Y$, and by the maximality of $Y$, we get $Z=Y$. In particular, we get $M+Y=M+Y+Z=M+Y+R$, which shows that $x\in R\subseteq M+Y$, a contradiction. Hence $M$ splits off in $X$, which concludes the proof since $X$ is an arbitrary comodule.
\end{proof}

We recall a few definitions used in \cite{C,CGT,GTN08,IO,LS}; the terminology is derived from serial modules. Given a coalgebra $C$, a $C$-comodule $M$ is called uniserial if $M$ has a unique composition series, equivalently, the lattice of submodules of $M$ is totally ordered. $M$ is called serial if $M$ is a direct sum of uniserial comodules. A coalgebra is called left (right) serial if it is serial as a left (right) comodule. It is called serial if it is serial as left and right comodule (see \cite{CGT}). 

\begin{corollary}\label{c.1}
Let $C$ be a coalgebra with the left finite Rat-splitting property. Then $C_\Ii$ is left serial.
\end{corollary}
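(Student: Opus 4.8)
The plan is to reduce the statement to showing that every infinite-dimensional indecomposable injective left $C$-comodule is uniserial. Since $C_\Ii=\bigoplus_{S\in\Ii}E(S)^{n(S)}$ by construction, and since a subspace of $E(S)$ is a $C$-subcomodule exactly when it is a $C_\Ii$-subcomodule ($C_\Ii$ being a subcoalgebra), this makes $C_\Ii$ a direct sum of uniserial comodules, i.e.\ serial. So I would fix $S\in\Ii$, set $M=E(S)$, and first record what is already available about $M$: it is injective and indecomposable with simple socle $S$; it is infinite dimensional (because $S\in\Ii$), yet almost finite by Theorem \ref{t.1}(i), so all its proper subcomodules are finite dimensional; and, being a subcomodule of the left Artinian coalgebra $C$ (Theorem \ref{t.1}(ii)), it is an Artinian, hence co-Noetherian, comodule, so every quotient $M/N$ is finitely cogenerated.

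The heart of the argument is to show that every Loewy layer ${\rm soc}_{n+1}(M)/{\rm soc}_n(M)={\rm soc}(M/{\rm soc}_n(M))$ is simple; write $M_n={\rm soc}_n(M)$. I would first apply Proposition \ref{p.inj} to $X:=M/M_n$: it is finitely cogenerated by co-Noetherianness of $M$, and it has no nonzero finite-dimensional quotient, because a quotient of $X$ is some $M/N$ with $M_n\subseteq N\subseteq M$, and if $N\subsetneq M$ then $N$ is finite dimensional so $M/N$ is infinite dimensional. Hence $X=M/M_n$ is injective. Now suppose ${\rm soc}(X)$ is not simple, and pick a simple summand $S_1$ of ${\rm soc}(X)$. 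Since $X$ is injective it contains a copy $A$ of the indecomposable injective hull $E(S_1)$ as a direct summand, say $X=A\oplus B$; and $B\neq 0$, since otherwise ${\rm soc}(X)$ would equal ${\rm soc}(A)=S_1$. Lifting to the preimages $P,Q$ of $A,B$ in $M$ gives $P\cap Q=M_n$ and $P+Q=M$, with $P\subsetneq M$ and $Q\subsetneq M$ (because $B\neq 0$ and $A\neq 0$ respectively); hence $P$ and $Q$ are finite dimensional, so $M=P+Q$ is finite dimensional — a contradiction. Therefore each ${\rm soc}(M/M_n)$ is simple.

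To conclude, I would observe that since every Loewy layer is simple, each $M_n$ has length $n$, its socle series is $0\subsetneq M_1\subsetneq\cdots\subsetneq M_n$, and a routine argument (a subcomodule $N\subseteq M_n$ of length $\ell$ satisfies $N={\rm soc}_\ell(N)\subseteq{\rm soc}_\ell(M_n)=M_\ell$, and equality of lengths forces $N=M_\ell$) shows that $M_n$ is uniserial, its only subcomodules being $M_0\subsetneq M_1\subsetneq\cdots\subsetneq M_n$. Consequently every finite-dimensional subcomodule of $M$ is contained in some $M_n$, hence equals some $M_k$; since $M$ is itself the only infinite-dimensional subcomodule of $M$ (almost finiteness), the subcomodules of $M$ are exactly $0\subsetneq M_1\subsetneq M_2\subsetneq\cdots\subsetneq M$, a chain. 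Thus $M=E(S)$ is uniserial, and $C_\Ii$ is left serial.

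The step I expect to be the main obstacle — or, more precisely, the one where the hypotheses are genuinely needed — is the injectivity of the quotients $M/M_n$. Almost finiteness by itself is not enough: if one only knew that ${\rm soc}(M/M_n)$ is non-simple, the resulting incomparable subcomodules both lie inside the finite-dimensional $M_{n+1}$ and produce no dimension contradiction. It is exactly Proposition \ref{p.inj} that upgrades a simple summand of ${\rm soc}(M/M_n)$ to an honest direct-sum decomposition of all of $M/M_n$, and that is what makes the infinite-dimensionality of $M$ bite.
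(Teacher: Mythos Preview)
Your proof is correct and uses the same core mechanism as the paper's: apply Proposition~\ref{p.inj} to show the relevant quotient of $E(S)$ is injective, then use almost finiteness to see it cannot decompose nontrivially (preimages of a nontrivial decomposition would both be proper, hence finite dimensional, forcing $E(S)$ finite dimensional). The difference is one of packaging. The paper carries this out only for the single quotient $E(S)/S$, concludes that $E(S)/S$ has simple socle, and then invokes an external characterization of uniserial injectives (from \cite{CGT} or \cite{IO}) to finish. You instead iterate the same argument over every Loewy layer $M/M_n$, establish that each ${\rm soc}(M/M_n)$ is simple, and then deduce uniseriality by hand from the resulting chain $M_0\subsetneq M_1\subsetneq\cdots$. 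Your route is longer but entirely self-contained; the paper's is shorter but leans on a cited result. One small point: your argument is implicitly inductive (you need $M_n$ finite dimensional, hence proper, to apply Proposition~\ref{p.inj} to $M/M_n$; this follows once you know the previous layers are simple), so it would read more cleanly if phrased as an induction on $n$.
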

\begin{proof}
Since $C_\Ii$ is the direct sum of the infinite dimensional injective indecomposable left $C$-comodules,we need to show that $E(S)$ is serial for each $S\in\Ii$. By the results of \cite[Section 1]{CGT} or \cite[Proposition 3.2]{IO}, it is enough to show that $E(S)/S$ has simple socle. But $E(S)/S$ has only finite dimensional proper subcomodules by Theorem \ref{t.1}, and since it is infinite dimensional, it follows that it has no finite dimensional nontrivial quotients and that it is Artinian. Moreover, the socle of $E(S)/S$ is finite dimensional so it is finitely cogenerated; therefore, by Proposition \ref{p.inj}, $E(S)/S$ is injective. But note that it is in fact indecomposable: if $E(T)/S=M\oplus N$, then at least one of $M$ and $N$ is infinite dimensional and not proper. Since $E(S)/S$ is injective, we deduce that it has simple socle.
\end{proof}

\begin{proposition}
Let $C$ be a coalgebra with the left finite Rat-splitting property and such that $C=C_\Ii$, that is, any indecomposable injective left $C$-comodule is infinite dimensional. If $S$ is a simple left $C$-comodule, then $Ext^C(S,S')\neq 0$ for at most one $S'\in\Ss$. 
\end{proposition}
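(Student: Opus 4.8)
The plan is to show that if $S$ is simple with $\mathrm{Ext}^C(S,S')\neq 0$ and $\mathrm{Ext}^C(S,S'')\neq 0$, then $S'\cong S''$; this is equivalent to saying $E(S)/S$ has simple socle (up to the finite-dimensionality issues we must check), and more precisely to saying $J(S)/S$ embeds in $E(S')$ for a single $S'$, where $J(S)$ denotes the sum of $S$ together with all simple-extension subcomodules. The natural approach is to study the structure of $E(S)/S$, exactly as in Corollary \ref{c.1}, but now extracting not just uniseriality of $E(S)$ but a sharper statement about $\mathrm{Ext}$. Recall from Corollary \ref{c.1} that, under the present hypothesis $C=C_\Ii$, each $E(S)$ is infinite dimensional, Artinian, with only finite dimensional proper subcomodules, and $E(S)/S$ is injective with simple socle; write $\mathrm{soc}(E(S)/S)=S'$. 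Then $E(S)/S\cong E(S')$ since it is an injective indecomposable (again infinite dimensional, so in $C_\Ii$) with socle $S'$.

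The key step is to translate the condition $\mathrm{Ext}^C(S,S'')\neq 0$ into a statement inside $E(S)$. A nonzero element of $\mathrm{Ext}^C(S,S'')$ gives a non-split extension $0\to S''\to W\to S\to 0$; since $E(S)$ is injective, $W$ embeds in $E(S)$ with $S$ mapping isomorphically to $\mathrm{soc}(E(S))=S$, so we obtain a length-two subcomodule $W\subseteq E(S)$ with $W/S\cong S''$, i.e. $S''\hookrightarrow E(S)/S$. Hence every $S''$ with $\mathrm{Ext}^C(S,S'')\neq 0$ is a simple subcomodule of $E(S)/S$. But $E(S)/S$ has simple socle equal to $S'$, so $S''\cong S'$. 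I would also spell out the converse containment for completeness: if $S''\cong S'=\mathrm{soc}(E(S)/S)$ then pulling back $S'$ along $E(S)\to E(S)/S$ yields a length-two subcomodule of $E(S)$ exhibiting a nonzero class in $\mathrm{Ext}^C(S,S')$, using that $S=\mathrm{soc}(E(S))$ is not a direct summand of this subcomodule (as $E(S)$ is indecomposable). This shows $\mathrm{Ext}^C(S,S')\neq 0$ for \emph{exactly} one $S'\in\Ss$ when $E(S)$ is infinite dimensional — which it always is here.

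The main obstacle, such as it is, is the bookkeeping around why $E(S)/S$ has simple socle: this was already done in Corollary \ref{c.1}, so I would simply cite it, but one must be careful that the argument there genuinely gives simple socle and not merely uniseriality of $E(S)$ — it does, via the cited results of \cite{CGT} or \cite[Proposition 3.2]{IO} together with Proposition \ref{p.inj}. A secondary point to handle cleanly is the identification of simple subcomodules of $E(S)/S$ with the socle: in general the socle of a comodule is the sum of \emph{all} simple subcomodules, so "simple socle" does immediately force any simple subcomodule to be isomorphic to $S'$, and there is nothing further to check. Thus the whole proof is short once Corollary \ref{c.1} is in hand; the content is really the observation that an extension class between simples materializes as a length-two subcomodule of the injective hull, so $\mathrm{Ext}^C(S,-)$ is "detected" by $\mathrm{soc}(E(S)/S)$.
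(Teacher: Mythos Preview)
Your argument has a genuine gap: you have the two variables in $\mathrm{Ext}$ reversed, and as a result you re-prove Corollary~\ref{c.1} rather than the proposition. In a non-split extension $0\to S''\to W\to S\to 0$ representing a nonzero class in $\mathrm{Ext}^C(S,S'')$, the simple $S''$ is the \emph{socle} of $W$ and $S$ is the \emph{top}. Injectivity of $E(S)$ lets you extend maps \emph{into} $E(S)$ defined on subcomodules, but $S$ is not a subcomodule of $W$; it is a quotient. What you actually get is an embedding $W\hookrightarrow E(S'')$, and hence $S\cong W/S''\hookrightarrow E(S'')/S''$. So the simple subcomodules of $E(S)/S$ detect those $S''$ with $\mathrm{Ext}^C(S'',S)\neq 0$, not those with $\mathrm{Ext}^C(S,S'')\neq 0$. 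Your argument therefore shows that at most one $S''$ satisfies $\mathrm{Ext}^C(S'',S)\neq 0$, which is exactly the content of Corollary~\ref{c.1} (left seriality of $C_\Ii$), and is the ``at most one arrow out of $S$'' half of Proposition~\ref{p.3.6}. Your ``converse'' paragraph has the same reversal: pulling back $S'=\mathrm{soc}(E(S)/S)$ along $E(S)\to E(S)/S$ produces $0\to S\to W\to S'\to 0$, a class in $\mathrm{Ext}^C(S',S)$, not in $\mathrm{Ext}^C(S,S')$.

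What the proposition actually asserts is the dual statement: the assignment $S'\mapsto \mathrm{soc}(E(S')/S')$ is \emph{injective} on $\Ss$. This does not follow formally from $E(S)/S$ having simple socle, and the paper proves it by a separate construction. Assuming $S_1\not\cong S_2$ both have $\mathrm{soc}(E(S_i)/S_i)\cong S$, one knows from Corollary~\ref{c.1} and Proposition~\ref{p.inj} that $E(S_i)/S_i\cong E(S)$; the paper then forms the pullback $M$ of $E(S_1)\twoheadrightarrow E(S)\twoheadleftarrow E(S_2)$, checks that $M$ has socle $S_1\oplus S_2$, has only finite-dimensional proper subcomodules, and hence (again by Proposition~\ref{p.inj}) is injective. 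This forces $M\cong E(S_1)\oplus E(S_2)$, which contradicts $M/S_1\cong E(S_2)$. That pullback step is the missing idea in your proposal.
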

\begin{proof}
Assume otherwise. This means that there are indecomposable comodules $M_1,M_2$ with $0\rightarrow S_i\rightarrow M_i\rightarrow S\rightarrow 0$, so since $C$ is left serial by Corollary \ref{c.1}, we get that $E(S_1)/S_1\cong E(S)\cong E(S_2)/S_2$ for $S_1,S_2\in\Ss$ and $S_1\not\cong S_2$. Let $M$ be the pull-back of the diagram
$$\xymatrix{ & E(S_1)\ar[dr]^{p_1} & \\
M\ar[ur]^{u_1}\ar[dr]_{u_2} & & E(S)\\
& E(S_2)\ar[ur]_{p_2} &}$$
so $M=\{(x_1,x_2)|p_1(x_1)=p_2(x_2)\}\subset E(S_1)\oplus E(S_2)$, where $u_1(x_1,x_2)=x_1$,  $u_2(x_1,x_2)=x_2$. Note that $S_1\oplus S_2\subseteq M$, and since $M\subseteq E(S_1)\oplus E(S_2)$, we get that $S_1\oplus S_2$ is the socle of $M$. 
We also see that if $(x_1,0)\in M$, then $p_1(x_1)=p_2(0)=0$ so $x_1\in S_1$. Similarly, if $(0,x_2)\in M$, then $x_2\in S_2$. This shows that $\ker(u_1)=S_2$ and $\ker(u_2)=S_1$. As $u_1,u_2$ are surjective (since $p_1$ and $p_2$ are), we conclude $M/S_1\cong E(S_2)$ and $M/S_2\cong E(S_1)$. Now this implies that $M$ is infinite dimensional and furthermore, it has only finite dimensional proper subcomodules. Indeed, let $0\neq X\subsetneq M$. Then $X$ contains a simple subcomodule of $M$. But since $S_1\not\cong S_2$, and $S_1\oplus S_2$ is the socle of $M$, we see that $S_1$ and $S_2$ are the \emph{only} simple subcomodules of $M$. Hence, we must have either $S_1\subseteq X$ or $S_2\subseteq X$. If, for example, $S_1\subseteq X$ then $X/S_1$ embeds in $M/S_1\cong E(S_2)$, and it is not equal to $M/S_1$ (since $X\neq M$). It follows that $X/S_1$ is finite dimensional, and so is $X$.\\
Finally, since $M$ is infinite dimensional with only finite dimensional proper subcomodules, it follows that $M$ has no finite dimensional non-trivial quotients. Since $M$ has finite dimensional socle $S_1\oplus S_2$, it embeds in $C\oplus C$, so it is also finitely cogenerated, and therefore $M$ is injective by Proposition \ref{p.inj}. Since the socle of $M$ is $S_1\oplus S_2$, we must have $M\cong X_1\oplus X_2$, with $S_1\subset X_1\cong E(S_1)$  and $S_2\subset X_2\cong E(S_2)$. But this shows that $E(S_2)\cong M/S_1\cong X_1/S_1\oplus X_2$, and since $E(S_2)$ is indecomposable, we get $X_1/S_1=0$, which is impossible (since $X_1\cong E(S_1)\supsetneq S_1$). 
\end{proof}

\begin{proposition}\label{p.3.6}
Let $C$ be a coalgebra with the left finite Rat-splitting property. Then the Ext-quiver of $C_\Ii$ is a finite disjoint union of cycles $S_1\rightarrow S_2\rightarrow\dots\rightarrow S_k\rightarrow S_1$.
\end{proposition}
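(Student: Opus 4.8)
The plan is to reduce to the case $C=C_\Ii$ and then read off the shape of the Ext-quiver from the preceding Proposition together with the serial structure of $C_\Ii$. For the reduction, note that by Proposition \ref{p.tr} the part $C_\Ii$ is a subcoalgebra of $C$, and it again has the left finite Rat-splitting property by \cite[2.9]{IO}; moreover the simple left $C_\Ii$-comodules are exactly the $S\in\Ii$, their $C_\Ii$-injective hulls are still the comodules $E(S)$ (each a direct summand of $C_\Ii$, hence $C_\Ii$-injective), and all of these are infinite dimensional. Hence it is enough to prove the statement for a coalgebra $C$ with the left finite Rat-splitting property such that every indecomposable injective left $C$-comodule is infinite dimensional, i.e. $C=C_\Ii$; we assume this from now on.

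Under this assumption, by Corollary \ref{c.1} the coalgebra $C$ is left serial, so each indecomposable injective left comodule $E(S)$, $S\in\Ss$, is uniserial; by Theorem \ref{t.1} it is infinite dimensional with only finite dimensional proper subcomodules, and $\Ss$ is finite. In particular $E(S)/S\neq 0$ and, being a quotient of a uniserial comodule, is itself uniserial, so its socle is simple; set $\sigma(S):={\rm soc}(E(S)/S)\in\Ss$. Applying $\Hom^C(S,-)$ to $0\to S'\to E(S')\to E(S')/S'\to 0$ and using that $E(S')$ is injective and that $S'\hookrightarrow E(S')$ is essential yields $Ext^C(S,S')\cong\Hom^C(S,E(S')/S')$; since ${\rm soc}(E(S')/S')=\sigma(S')$ is simple, this is nonzero exactly when $S\cong\sigma(S')$, and then one-dimensional over $\End^C(S)$. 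Thus the arrows of the Ext-quiver of $C$ are precisely the pairs $\sigma(S')\to S'$, one for each $S'\in\Ss$, and in particular every vertex has exactly one incoming arrow.

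Now I would invoke the preceding Proposition: for each simple left $C$-comodule $S$ there is at most one $S'$ with $Ext^C(S,S')\neq 0$, that is, at most one $S'$ with $\sigma(S')\cong S$. This says precisely that $\sigma\colon\Ss\to\Ss$ is injective, hence --- as $\Ss$ is finite --- a bijection. Consequently the Ext-quiver of $C$ is the functional graph of the permutation $\sigma$, with arrows $\sigma(S')\to S'$, and such a graph is the disjoint union of the cyclic orbits of $\sigma$: an orbit $S_1,\sigma^{-1}(S_1),\sigma^{-2}(S_1),\dots$ of length $k$ gives a cycle $S_1\to S_2\to\dots\to S_k\to S_1$. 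Since $\Ss$ is finite there are finitely many such cycles, which is the assertion. Concretely $k$ is the period of the eventually periodic sequence of composition factors of $E(S_1)$, which cycles through $S_1,\dots,S_k$.

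The only point that needs care, I expect, is the reduction step --- checking that $C_\Ii$ inherits the hypotheses and has all indecomposable injectives infinite dimensional --- together with the identification of the arrows of the Ext-quiver with the second socle layer map $\sigma$. Once these are in place, the proposition is simply the elementary fact that a permutation of a finite set decomposes into disjoint cyclic orbits, applied to $\sigma$, whose injectivity is exactly the content of the preceding Proposition.
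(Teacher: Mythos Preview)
Your proof is correct and follows essentially the same route as the paper: seriality (Corollary \ref{c.1}) gives a well-defined map $\sigma(S)={\rm soc}(E(S)/S)$ on the finite set $\Ss$, the preceding Proposition makes it injective, hence a permutation, and the quiver is its cycle decomposition. The only cosmetic difference is that the paper orients the arrows as $S\to{\rm soc}(E(S)/S)$ (calling this map $\alpha$ in the next proof) rather than your $\sigma(S')\to S'$, which is immaterial since the conclusion is a union of cycles.
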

\begin{proof}
By the previous section, we have that there is exactly one arrow starting at any vertex $S\in\Ss$: indeed, we know that there is at most one arrow out of $S$, and $S\neq E(S)$ so there is some nontrivial extension of $S$. Also, by the previous Proposition, we also have that there is at most one arrow into $S$. But it is easy to see that a finite graph $\Ss$ which has the property that exactly one arrow goes out of any vertex and at most one comes into any vertex is a disjoint union of cycles. The finiteness follows from Theorem \ref{t.1}(iii).
\end{proof}

Recall that for a comodule $M$, its coradical filtration $(M_n)_n$ is build up as the Loewy series of $M$, $M_n=L_n(M)$, where $L_0(M)$ is the socle of $M$, and $L_{n+1}(M)=L_0(M/L_n(M))$. Also, one conveys to write $L_{-1}(M)=0$. Since a comodule $M$ is a chain (uniserial) if and only if its coradical filtration is a composition series (possibly infinite), and it is the only composition series in this case, it makes sense to consider the simple left comodules $S$ which appear as factors $M_n/M_{n-1}$ of this composition series in $0\subset M_0\subset M_1\subset\dots$ of $M$ and also consider their multiplicities $[M:S]$, which could be infinite. We convey to write $M_{-1}=0$. Let us also write $[M_0,M_1/M_0,M_2/M_1,\dots,M_n/M_{n-1},\dots]$ for the {\it ordered sequence} of the factors in the composition series of $M$; this is well determined (up to isomorphism) since there is only one such composition series.

\begin{proposition}\label{p.3.7}
Let $C$ be a coalgebra with the left finite Rat-splitting property, and such that every left injective indecomposable comodule is infinite dimensional. Then $C$ is a serial coalgebra.
\end{proposition}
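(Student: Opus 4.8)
The goal is to show that $C$ is serial, knowing (by hypothesis) that $C = C_\Ii$ is left serial and, by Propositions \ref{p.3.6} and the previous results, that the Ext-quiver of $C$ is a finite disjoint union of cycles $S_1 \to S_2 \to \dots \to S_k \to S_1$. Since ``serial'' means serial on both sides, and we already know the left side, the work is to establish that $C$ is right serial. The plan is to reduce to a single cycle and then analyze the structure of the injective hulls $E(S_i)$ explicitly via their coradical filtrations.

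First I would reduce to one connected component: by Theorem \ref{t.1}(iii) and Proposition \ref{p.3.6}, $C$ decomposes as a direct sum of subcoalgebras, one for each cycle in the Ext-quiver, so it suffices to treat the case where the Ext-quiver is a single cycle $S_1 \to \dots \to S_k \to S_1$ (indices read mod $k$). Here each $E(S_i)$ is a uniserial left comodule whose ordered composition factor sequence is $[S_i, S_{i-1}, S_{i-2}, \dots]$, cycling through all $k$ simples periodically and forever (it is infinite dimensional since $C = C_\Ii$). The key combinatorial point to extract is that, up to isomorphism, there is exactly one infinite uniserial left comodule with socle $S_i$, namely $E(S_i)$, and its truncations $E(S_i)/(E(S_i))_{n-1}$ are (up to shift) the $E(S_j)/(E(S_j))_m$'s — i.e. every finite-dimensional uniserial left comodule is a section of some $E(S_i)$. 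Since $C$ as a left comodule is $\bigoplus_i E(S_i)^{n(S_i)}$, the multiplicity $[C : S_j]$ as a left comodule is infinite for every $j$.

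Next I would transfer this to the right. The crucial observation is the self-duality of the situation under $(-)^* $: since $C^*$ is left Noetherian (Theorem \ref{t.1}(ii)), Corollary \ref{c.duality} gives a duality between finitely cogenerated left $C$-comodules and finitely generated left $C^*$-modules, hence between the finite-dimensional left and right comodule categories in the usual way, and the Ext-quiver of $C$ for right comodules is the opposite quiver — but a disjoint union of cycles is isomorphic to its opposite (reverse each cycle). So the right Ext-quiver is again a union of cycles $S_1' \to S_k' \to S_{k-1}' \to \dots$. The content to prove is that each right injective indecomposable $E_r(T)$ is also uniserial. For this I would argue: $E_r(T)$ is a finitely cogenerated... no — rather, I would use that $C_0$ is finite dimensional (almost connected), and for almost connected coalgebras one can compute: the right injective hull $E_r(T)$ has socle $T$ and $E_r(T)/T$ has socle determined by $\mathrm{Ext}^1$ data, which by the right Ext-quiver being a union of cycles means exactly one arrow out and at most one in. To conclude $E_r(T)$ is uniserial it suffices (by \cite[Section 1]{CGT}) to show $E_r(T)/T$ has simple socle, i.e. there is exactly one arrow into $T$ in the left quiver; and since the left Ext-quiver is a disjoint union of cycles, every vertex has exactly one arrow in and one out — so indeed $E_r(T)/T$ has simple socle. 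Iterating, $E_r(T)$ is uniserial.

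The main obstacle I anticipate is the bookkeeping that makes the last paragraph rigorous: one must carefully justify that, for an almost connected coalgebra whose left Ext-quiver is a disjoint union of cycles, the \emph{right} Ext-quiver is obtained by reversing arrows (this is standard: $\mathrm{Ext}^1_C(S,S')$ for left comodules equals $\mathrm{Ext}^1_C(S'^\vee, S^\vee)$ for the dual right comodules, where $(-)^\vee$ is linear dual of a simple), and then that ``one arrow out, at most one in'' for the right quiver — which follows since the reverse of a union of cycles is a union of cycles — forces each $E_r(T)$ to be uniserial, hence $C$ right serial. Combined with the hypothesis that $C$ is left serial, $C$ is serial. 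I would present the argument by first isolating the lemma ``an almost connected coalgebra whose left and right Ext-quivers are both disjoint unions of cycles, and which is left serial, is serial,'' and then noting we have verified all its hypotheses. Alternatively, and perhaps more cleanly, I would avoid dualizing the quiver by directly exhibiting a right-comodule composition series of each $E_r(T)$ using the left-serial structure of $C$: write $C = \bigoplus E(S_i)^{n(S_i)}$ as a left comodule, observe each $E(S_i)$ is also a right subcomodule component's worth of data since subcoalgebra pieces are bicomodules, and read off that the right socle series of the relevant summand is again a chain — but this requires knowing $n(S_i) = 1$ for all $i$, which follows because in a single cycle the subcoalgebra generated is $E(S_1) + \dots + E(S_k)$ with each appearing once (a point worth checking from almost-connectedness and the shape of the coradical). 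Either route, the heart is purely the combinatorics of cyclic quivers.
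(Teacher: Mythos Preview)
Your main route has a genuine gap at the step ``the right Ext-quiver is the reverse of the left one, hence a union of cycles, hence $E_r(T)/T$ has simple socle.'' The duality $\mathrm{Ext}^1_{{}^C\Mm}(S',S)\cong\mathrm{Ext}^1_{\Mm^C}(S^*,S'^*)$ is an isomorphism of $K$-vector spaces, so it tells you that for each right simple $T$ there is exactly one $T'$ with $\mathrm{Ext}^1_{\Mm^C}(T',T)\neq 0$; thus $\mathrm{soc}(E_r(T)/T)$ is \emph{isotypic} of type $T'$. But the multiplicity of $T'$ in that socle is $\dim_{\mathrm{End}(T')}\mathrm{Ext}^1_{\Mm^C}(T',T)$, and this need not be $1$. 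Concretely, left seriality gives $\dim_K\mathrm{Ext}^1_{{}^C\Mm}(\alpha(S),S)=\dim_K\mathrm{End}(\alpha(S))$ (one copy of $\alpha(S)$ in $\mathrm{soc}(E_l(S)/S)$); after dualizing, the multiplicity of $S^*$ in $\mathrm{soc}(E_r(\alpha(S)^*)/\alpha(S)^*)$ is $\dim_K\mathrm{End}(\alpha(S))/\dim_K\mathrm{End}(S)$, which is $1$ only if the endomorphism division rings of $S$ and $\alpha(S)$ have the same $K$-dimension. Over a general base field there is no a priori reason for this, so your argument only proves right seriality when all simples on a cycle have $\mathrm{End}$ of equal dimension (e.g.\ the pointed case). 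Your alternative via $n(S_i)=1$ has the same defect: $n(S)$ is the multiplicity of $S$ in $C_0$, equal to $\dim_{D_S}S$, and this is typically $>1$.

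The paper closes exactly this gap with a dimension count. It dualizes the finite truncations $L_kE_l(\alpha^{-k}(T^*))$ to obtain uniserial \emph{right} comodules of length $k+1$ with socle $T$, which embed in $E_r(T)$ and force $\dim\bigl(L_kE_r(T)/L_{k-1}E_r(T)\bigr)\ge\dim\alpha^{-k}(T^*)$. It then computes $\dim C_{N-1}$ (where $\alpha^N=\mathrm{id}$) two ways, once from $C=\bigoplus_S E_l(S)^{n(S)}$ and once from $C=\bigoplus_T E_r(T)^{p(T)}$, and the two totals coincide; this squeezes all the inequalities to equalities, so $L_1E_r(T)/T$ is simple for every $T$. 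Your quiver-reversal is essentially the ``$\ge$'' half of this argument; what you are missing is the global counting that upgrades it to an equality.
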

\begin{proof}
By Corollary \ref{c.1} and the hypothesis that $C=C_\Ii$, we already know that $C$ is left serial. Let $T$ be a simple right $C$-comodule and let $S=T^*$. The previous proposition shows that the sequence of factors of the left injective indecomposable $C$-comodule $E(S)$ is of the form $[\underbrace{S_1,S_2,...,S_n},\underbrace{S_1,S_2,...,S_n},\dots]$ (where $S=S_1$). This shows that there is a left comodule $X$ of arbitrarily large finite length which has $S=S_1$ on the last position in its sequence of factors: $[\dots\underbrace{S_1,...,S_n},S_1]$. Dualizing, we get that $X^*$ is a uniserial module whose socle is $T$, and which can be made to have arbitrary (Loewy) length. Consequently, the injective hull $E(T)$ of $T$ is infinite dimensional and has infinite Loewy series. For each $S\in\Ss$, let $\alpha(S)\in\Ss$ be the end of the arrow starting at $S$ in the Ext quiver of (left comodules for) $C$, that is, we have $soc(E(S)/S)\cong \alpha(S)$. We note that the composition series of $E(S)$ is then $[S,\alpha(S),\alpha^2(S),\dots,\alpha^m(S),\dots]$. By Proposition \ref{p.3.6}, $\alpha$ is a permutation of the finite set $\Ss$. Let $N>1$ be such that $\alpha^N={\rm id}_{\Ss}$. We get then for each $k$ that $\alpha^k(S)\cong L_kE(S)/L_{k-1}E(S)$ and  $\dim(C_k/C_{k-1})=\dim(\bigoplus\limits_{S\in \Ss}n(S)\cdot \frac{L_kE(S)}{L_{k-1}E(S)})=\sum\limits_{S\in \Ss}n(S)\dim(\alpha^k(S))$ (recall that $n(S)$, respectively $p(T)$, is the multiplicity of the left $C$-comodule $S$, respectively right $C$-comodule $T$, in $C_0$). Therefore, using these we get
\begin{eqnarray*}
\dim(C_{N-1}) & = & \sum\limits_{S\in\Ss}n(S)\dim(L_{N-1}E(S))=\sum\limits_{S\in\Ss}n(S)\sum\limits_{k=0}^{N-1}\dim(\alpha^k(S))
\end{eqnarray*}
Let $T\in\Tt$. If $X=L_kE(\alpha^{-k}(T^*))$ is the left uniserial comodule of length $k+1$ and Loewy length $k$ with simple socle $Y=\alpha^{-k}(T^*)$, 
we have $L_kE(Y)/L_{k-1}E(Y)\cong \alpha^k(Y)\cong \alpha^k(\alpha^{-k}(T^*))=T^*$ as before, 
so the top of $X$ is isomorphic to $T^*$. Hence, $X^*$ is a right comodule with the bottom (socle) isomorphic to $T$, and which is uniserial, so it has Loewy length equal to $k$. Thus $X^*$ embeds in $E(T)$, and it is not difficult to see that this implies that the top $\alpha^{-k}(T^*)^*$ of $X^*$ must occur with positive multiplicity in $L_kE(T)/L_{k-1}E(T)$. Therefore, $\dim(L_kE(T)/L_{k-1}E(T))\geq \dim(\alpha^{-k}(T^*)^*)$. We will show that this inequality is in fact an equality for $k\leq N-1$, and this will imply that $L_kE(T)/L_{k-1}E(T)\cong\alpha^{-k}(T^*)^*$. We have:
\begin{eqnarray*}
\dim(C_{N-1}) & = & \sum\limits_{T\in\Tt}p(T)\sum\limits_{k=0}^{N-1}\dim(L_kE(T)/L_{k-1}E(T))\geq\\
& \geq & \sum\limits_{T\in\Tt}p(T)\sum\limits_{k=0}^{N-1}\dim(\alpha^{-k}(T^*)^*)\\
& = & \sum\limits_{S\in\Ss}n(S)\sum\limits_{k=0}^{N-1}\dim(\alpha^{-k}(S)^*)\,\,\,\,\,\,-\,{\rm since\,}p(T)=n(T^*)\\
& = & \sum\limits_{S\in\Ss}n(S)\sum\limits_{k=0}^{N-1}\dim(\alpha^{-k}(S))\\
& = & \sum\limits_{S\in\Ss}n(S)\sum\limits_{k=0}^{N-1}\dim(\alpha^{k}(S))\\
& = & \dim(C_{N-1})
\end{eqnarray*}
The second to last equality $\sum\limits_{k=0}^{N-1}\dim(\alpha^{-k}(S))=\sum\limits_{k=0}^{N-1}\dim(\alpha^{k}(S))$ is true since $\alpha^N={\rm id}$. The above sequence shows that we must have equalities everywhere, and so the claim is proved. In particular, we obtain that $L_1E(T)/L_0E(T)\cong \alpha^{-1}(T^*)^*$ is simple for all $T$, and by the equivalent characterizations of serial coalgebras we obtain that $C$ is also right serial.
\end{proof}

Combining the results so far, we obtain a precise characterization of coalgebras having the left finite Rat-splitting property:

\begin{theorem}\label{t.SplT}
Let $C$ be a coalgebra. Then $C$ has the left finite Rat-splitting property if and only if $C=\left(\begin{array}{cc} D & M \\ 0 & E \end{array}\right)$ is a generalized triangular comatrix coalgebra, where $D$ is a finite coproduct of (indecomposable) serial coalgebras whose Ext-quiver is a cycle (so $D$ is Artinian), $E$ is a finite dimensional coalgebra and $M$ is a finite dimensional $D$-$E$-bicomodule. 
\end{theorem}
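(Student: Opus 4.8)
The plan is to prove both implications by assembling the pieces already established. For the ``if'' direction, suppose $C=\left(\begin{array}{cc} D & M \\ 0 & E \end{array}\right)$ with $D$ a finite coproduct of indecomposable serial coalgebras each having a cycle as Ext-quiver, $E$ finite dimensional, and $M$ a finite dimensional $D$-$E$-bicomodule. The key observation is that such a coalgebra $D$ has the left finite Rat-splitting property; this is exactly the content of (or a direct consequence of) the serial case already cited from \cite{IO}, since an indecomposable serial coalgebra whose Ext-quiver is a cycle is serial and almost connected, so its injective indecomposables are the infinite-dimensional uniserial comodules and \cite{IO} gives the splitting. One then invokes Proposition \ref{p.TrSp} directly: $D$ left finite Rat-splitting, $E$ finite dimensional, $M$ finite dimensional $D$-$E$-bicomodule imply $C$ is left finite Rat-splitting.

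For the ``only if'' direction, assume $C$ has the left finite Rat-splitting property. First apply Theorem \ref{t.1} to get that $C$ is left Artinian, $C^*$ is left Noetherian, and $\Ss=\Ii\sqcup\Ff$ is finite. Then Proposition \ref{p.tr} produces the triangular decomposition $C\cong\left(\begin{array}{cc} C_\Ii & fCe \\ 0 & fCf \end{array}\right)$, where $fCe$ and $fCf$ are finite dimensional; set $D=C_\Ii$, $E=fCf$, $M=fCe$. So $E$ is finite dimensional and $M$ is a finite dimensional $D$-$E$-bicomodule, as required. It remains to identify $D=C_\Ii$. Since $D$ is a subcoalgebra of $C$ (Proposition \ref{p.tr}) and all its injective indecomposable left comodules are infinite dimensional by construction of $\Ii$, Proposition \cite[2.9]{IO} (as invoked in the text) gives that $D$ itself has the left finite Rat-splitting property, and $D=D_\Ii$. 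Now Proposition \ref{p.3.7} shows $D$ is serial, and Proposition \ref{p.3.6} shows its Ext-quiver is a finite disjoint union of cycles. Decomposing $D$ according to the connected components of its Ext-quiver, each block is an indecomposable serial coalgebra whose Ext-quiver is a single cycle (and each such block is Artinian, being a finite-dimensional-socle serial coalgebra with periodic Loewy structure, or by Theorem \ref{t.1}(ii) applied to the block). This exhibits $D$ as a finite coproduct of the desired form.

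The one point requiring a small argument is the claim, used in the ``if'' direction, that an indecomposable serial coalgebra with cyclic Ext-quiver has the left finite Rat-splitting property. I would handle this by recalling from \cite{IO} that serial coalgebras have the splitting property for finitely generated modules; alternatively one gives a direct argument: over such a coalgebra every finitely generated $C^*$-module is an extension of a finite-dimensional (rational) module by a projective, using that the uniserial injective indecomposables have a periodic composition series governed by the cycle. The remaining steps are bookkeeping: matching the block decomposition of $D$ (as a coalgebra, via its coradical decomposition into indecomposable blocks) with the connected-component decomposition of the Ext-quiver, which is standard. The main structural obstacle — proving that the infinite-dimensional part $C_\Ii$ must be serial with cyclic Ext-quiver — has already been overcome in Propositions \ref{p.3.6} and \ref{p.3.7}, so the theorem is essentially an assembly of these results together with Proposition \ref{p.TrSp}.
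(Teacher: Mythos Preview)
Your proposal is correct and follows essentially the same route as the paper: for the ``only if'' direction you invoke Proposition~\ref{p.tr} for the triangular decomposition and Propositions~\ref{p.3.6} and~\ref{p.3.7} for the structure of $D=C_\Ii$, and for the ``if'' direction you cite \cite{IO} for the serial case and then apply Proposition~\ref{p.TrSp}. The paper's proof is slightly terser but uses exactly these ingredients in the same order.
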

\begin{proof}
If $C$ has the left finite Rat-splitting property, then the decomposition of $C$ as an upper triangular comatrix coalgebra follows from Proposition \ref{p.tr}, and the statement on the coalgebra $D$ follows immediately from Propositions \ref{p.3.6} and \ref{p.3.7}, since the coalgebra $D$ decomposes as a direct sum of components corresponding to the connected components of its Ext-quiver.\\
For the converse, if $D$ is serial then it has the left finite Rat-splitting property as noted in the introduction (\cite{IO}), so we are in the hypothesis of Proposition \ref{p.TrSp}, and the proof is finished.
\end{proof}

The above characterization of coalgebras having the left finite Rat splitting property also allows us to answer the question of whether the left finite Rat-splitting property is equivalent to the right finite Rat-splitting property, asked also in \cite{IO}. 

\section{A few examples}

\begin{example}
Consider the coalgebra $\KK[[X]]^0$, the finite dual of the formal power series algebra. This coalgebra has a basis $c_0,c_1,\dots c_n\dots$ and comultiplication $\Delta(c_n)=\sum\limits_{i+j=n}c_i\otimes c_j$ and counit $\varepsilon(c_n)=\delta_{0,n}$, and is also called the divided power coalgebra (since when ${\rm char}(\KK)=0$, the basis $d_n=\frac{1}{n!}c_n$ has $\Delta(d_n)=\sum\limits_{i=0^n}\left(
\begin{array}{c} n \\ i\end{array} \right)d_i\otimes d_{n-i}$). Then the dual of this coalgebra is $(\KK[[X]]^0)^*\cong \KK[[X]]$ - the algebra of formal power series; the category of rational $\KK[[X]]^*$-modules coincides with that of nilpotent $\KK[X]$ modules, i.e. modules $M$ over the polynomial algebra $\KK[X]$ for which given any $u\in M$, there is $n=n(u)$ such that $X^{n}\cdot u=0$. This is the same as the category of torsion $\KK[[X]]$-modules; obviously, from classical algebra results, then the rational (or torsion) part of every f.g. module splits off, but this is also a particular case of the above result.
\end{example}

\begin{example}
Let $C=\left(\begin{array}{cc} \KK[[X]]^o & \KK \\ 0 & \KK \end{array}\right)$, where $\KK[[X]]^o$ is the divided power coalgebra as above. $\KK$ is a simple left $\KK[[X]]^o$-comodule, and has a trivial right comodule structure over the coalgebra $\KK$; this gives $\KK$ a bicomodule structure. Then $C$ has the left finite Rat-splitting property, but not the right finite Rat-splitting property. Indeed, by Theorem \ref{t.SplT} we have that $C$ has the left finite Rat-splitting property. Denote $x=\left(\begin{array}{cc} 0 & 1 \\ 0 & 0 \end{array}\right)$ and $d=\left(\begin{array}{cc} 0 & 0 \\ 0 & 1 \end{array}\right)$. Then we have $\Delta(d)=d\otimes d$ and $\Delta(x)=c_0\otimes x+x\otimes d$. This shows that the decomposition of $C$ as a direct sum of injective right indecomposable comodules is
$$C=\KK\{x;c_n|n\geq 0\}\oplus \KK\{d\}$$
This shows that the injective hull of the simple (one-dimensional) right $C$-comodule $\KK\{c_0\}$ is $\KK\{x;c_n|n\geq 0\}$ and it has an infinite dimensional subcomodule $\KK\{c_n|n\geq 0\}$. By Theorem \ref{t.1}, $C$ cannot have the right finite Rat-splitting property. 
\end{example}

We finish with two more examples connected to path algebras and path coalgebras of quivers. In general, the dual of a path coalgebra of a quiver is sometimes called the {\it complete} path algebra of that quiver. For connections between these structures we refer the reader to \cite{DIN1}.


\begin{example}\label{e.2}
Let $\tilde{\AA_n}$ be the cyclic oriented quiver with $n$ vertices, and arrows forming an oriented cycle. Let $S$ be a collection of paths in $\AA_n$, and $\overline{S}$ be the set of all paths that are subpath of some path in $S$ (i.e. $\overline{S}$ is the closure of $S$ under taking subpaths). Then the span of $\overline{S}$ is a coalgebra $C$, which is serial; for example, $C$ could be the full path coalgebra of $\tilde{\AA_n}$. This is called a monomial coalgebra. The comodules over $C$ (or the rational $C^*$-modules) are exactly those $C^*$-modules which are nilpotent as modules over the path algebra of $\tilde{\AA_n}$, in the sense of \cite{chin}, or equivalently, modules $M$ for which the annihilator of every element $x\in M$ contains an ideal generated by paths (i.e. a monomial ideal; see also \cite{DIN1}). Then the rational part $Rat(M)$ of a $C^*$-module $M$ coincides with the locally nilpotent part. In fact, it is not difficult to see that in this case $Rat(M)$ is precisely the torsion part of $M$ as it is the case for $\KK[[X]]$. This becomes more clear from the following equivalent description of the structure of the dual $C^*$ of this coalgebra $C$: $C^*$ consists of families $(\alpha_p)_{p\in \overline{S}}$ of elements in $\KK$ indexed by $\overline{S}$, with convolution product similar to that of $\KK[[X]]$, i.e. 
$$(\alpha_p)_{p\in \overline{S}}*(\beta_q)_{q\in\overline{S}}=(\sum\limits_{r=pq}\alpha_p\beta_q)_{r\in\overline{S}}$$
This is usually called the complete path (or monomial) algebra. Moreover, $Rat(M)$ splits off in $M$ for every finitely generated $C^*$-module $M$. Any direct sum $D$ of such coalgebras, and triangular matrix coalgebras constructed with such a $D$ as in Theorem \ref{t.SplT} will have the same property.  
\end{example}

\begin{remark}
Let $C=\left(\begin{array}{cc} D & M \\ 0 & E \end{array} \right)$ be a triangular matrix coalgebra such that:\\
$\bullet$ $D$ is a serial coalgebra obtained as direct sum from serial monomial coalgebras as in Example \ref{e.2} above, (i.e. $D$ is a subcoalgebra of the path coalgebra of a quiver consisting of a disjoint union of cycles, such that $D$ is spanned by paths)\\
$\bullet$ the coalgebra $E$ and the $D$-$E$-bicomodule $M$ are finite dimensional. \\
From Remark \ref{r1} and Proposition \ref{p1}, it follows that for a left $C^*$-module $\left(\begin{array}{c} X \\ Y \end{array} \right)$, the rational part is given by $Rat\left(\begin{array}{c} X \\ Y \end{array} \right) = \left(\begin{array}{c} Rat(X) \\ Y \end{array} \right)$. Thus, the rational $C^*$-modules are exactly those modules for which every element is annihilated by a monomial ideal, where a monomial ideal is an ideal generated by matrices $\left(\begin{array}{cc} p & 0 \\ 0 & 0 \end{array} \right)$ corresponding to paths $p$ in the quiver of $D$. These could still be called (locally) nilpotent modules. Then for every left finitely generated module over the triangular matrix algebra $C^*$ dual to $C$, its rational (or ``locally nilpotent'') part is a direct summand (splits off).
\end{remark}

\begin{example}\label{e.4.6}
Let $Q$ be a finite quiver (finitely many vertices and arrows), and let $C_1,\dots,C_n$ be oriented cycles of $Q$. Let $H$ be a monomial coalgebra of $Q$, i.e. a subcoalgebra of the path coalgebra of $Q$ which has a basis $B$ of paths and assume that:\\
(a) only finitely many elements of $B$ are paths that are not contained in some cycle $C_k$;\\
(b) a path in $B$ that ends at a vertex of a cycle $C_k$ is contained in $C_k$. Equivalently, we may assume that no arrow in $B$ ends at a point in some $C_k$, unless it is in (that) $C_k$, as in example below 

$$\xygraph{ 
!~:{@{-}|@{>}}
a(:@/^-.5pc/[ul]{a_2}:@/^-.5pc/[dl]{\dots}:@/^-.5pc/[dr]{a_k}:[d]{\bullet}"a_k":@/^-.5pc/"a":[r]{\bullet}:[r]{\bullet}[r]e
:[l]"e"
:@/^.5pc/[ur]{e_2}:@/^.5pc/[dr]{\dots}:@/^.5pc/[dl]{e_l}:[d]{\bullet}"e_l":@/^.5pc/"e"[ddd]{g}:[ur]"g"
:@/^-.5pc/[l]{g_2}:[l]{\bullet}"g_2":[u]{\bullet}"g_2":@/^-.5pc/[d]{\dots}:@/^-.5pc/[r]{g_n}:@/^-.5pc/[u]
)}$$

Let $C$ be the subcoalgebra of $H$ having as basis the subpaths of cycles $C_k$, $k=1,\dots,n$, and let $Y$ be the subspace spanned by the set $U$ of the rest of the paths in $B$. It is not difficult to see that the combinatorial condition (b) translates to the fact that $Y$ is a left subcomodule in $H$ (since if a path $p$ is in $U$, then all the paths $s$ for which $p=rs$ are in $U$). Also, condition (a) translates into the fact that $Y$ is finite dimensional. Now using Proposition \ref{p.triang}, we see that $H$ is an upper triangular matrix coalgebra, with the left corner isomorphic to $C$, and the second column comodule isomorphic to $Y$. Moreover, $C$ is serial (note that (b) implies that the $C_k$'s are disjoint). Hence, $H$ satisfies the conditions of Theorem \ref{t.SplT}, and has the left finite Rat splitting property.  \\
Note again that the algebra $C^*$ is in fact a complete monomial algebra obtained from the quiver $Q$ and the paths $B$; that is the structure of $A=C^*$ is given by
\begin{eqnarray*}
C^* & = & \{(\alpha_p)_{p\in B}| \alpha_p\in \KK\} \\
(\alpha_p)_{p\in B} * (\beta_{q})_{q\in B} & = & (\sum\limits_{pq=r}\alpha_p\beta_q)_{r\in B} 
\end{eqnarray*}
Moreover, by the above, the locally nilpotent part of every left finitely generated left $A$-module is a direct summand; nevertheless, if $U\neq \emptyset$, the locally nilpotent part of finitely generated right $A$-modules is not always a direct summand.
\end{example}


\bigskip\bigskip

\begin{center}
\sc Acknowledgment
\end{center}
This work written within the frame of the strategic grant POSDRU/89/1.5/S/58852, Project ``Postdoctoral programe for training scientific researchers'' cofinanced by the European Social Fund within the Sectorial Operational Program Human Resources Development 2007-2013. The research was also supported by the UEFISCDI Grant PN-II-ID-PCE-2011-3-0635, contract no. 253/5.10.2011 of CNCSIS. The author acknowledges the work of the referees whose detailed reports helped improve this paper.

\bigskip\bigskip

\vspace*{3mm} 
\begin{flushright}
\begin{minipage}{148mm}\sc\footnotesize

Miodrag Cristian Iovanov\\
University of Iowa\\
Department of Mathematics, McLean Hall \\
Iowa City, IA, USA\\
and\\
University of Bucharest, Faculty of Mathematics, Str. Academiei 14\\ 
RO-010014, Bucharest, Romania\\
{\it E--mail address}: {\tt
yovanov@gmail.com; miodrag-iovanov@uiowa.edu}\vspace*{3mm}

\end{minipage}
\end{flushright}
\end{document}